\def\serieslogo@{} \def\@setcopyright{} \makeatother
\renewcommand*\env@matrix[1][c]{\hskip -\arraycolsep
  \let\@ifnextchar\new@ifnextchar
  \array{*\c@MaxMatrixCols #1}}
\definecolor{AliceBlue}{rgb}{0.94,0.97,1.00}
\definecolor{AntiqueWhite1}{rgb}{1.00,0.94,0.86}
\definecolor{AntiqueWhite2}{rgb}{0.93,0.87,0.80}
\definecolor{AntiqueWhite3}{rgb}{0.80,0.75,0.69}
\definecolor{AntiqueWhite4}{rgb}{0.55,0.51,0.47}
\definecolor{AntiqueWhite}{rgb}{0.98,0.92,0.84}
\definecolor{BlanchedAlmond}{rgb}{1.00,0.92,0.80}
\definecolor{BlueViolet}{rgb}{0.54,0.17,0.89}
\definecolor{CadetBlue1}{rgb}{0.60,0.96,1.00}
\definecolor{CadetBlue2}{rgb}{0.56,0.90,0.93}
\definecolor{CadetBlue3}{rgb}{0.48,0.77,0.80}
\definecolor{CadetBlue4}{rgb}{0.33,0.53,0.55}
\definecolor{CadetBlue}{rgb}{0.37,0.62,0.63}
\definecolor{CornflowerBlue}{rgb}{0.39,0.58,0.93}
\definecolor{DarkBlue}{rgb}{0.00,0.00,0.55}
\definecolor{DarkCyan}{rgb}{0.00,0.55,0.55}
\definecolor{DarkGoldenrod1}{rgb}{1.00,0.73,0.06}
\definecolor{DarkGoldenrod2}{rgb}{0.93,0.68,0.05}
\definecolor{DarkGoldenrod3}{rgb}{0.80,0.58,0.05}
\definecolor{DarkGoldenrod4}{rgb}{0.55,0.40,0.03}
\definecolor{DarkGoldenrod}{rgb}{0.72,0.53,0.04}
\definecolor{DarkGray}{rgb}{0.66,0.66,0.66}
\definecolor{DarkGreen}{rgb}{0.00,0.39,0.00}
\definecolor{DarkGrey}{rgb}{0.66,0.66,0.66}
\definecolor{DarkKhaki}{rgb}{0.74,0.72,0.42}
\definecolor{DarkMagenta}{rgb}{0.55,0.00,0.55}
\definecolor{DarkOliveGreen1}{rgb}{0.79,1.00,0.44}
\definecolor{DarkOliveGreen2}{rgb}{0.74,0.93,0.41}
\definecolor{DarkOliveGreen3}{rgb}{0.64,0.80,0.35}
\definecolor{DarkOliveGreen4}{rgb}{0.43,0.55,0.24}
\definecolor{DarkOliveGreen}{rgb}{0.33,0.42,0.18}
\definecolor{DarkOrange1}{rgb}{1.00,0.50,0.00}
\definecolor{DarkOrange2}{rgb}{0.93,0.46,0.00}
\definecolor{DarkOrange3}{rgb}{0.80,0.40,0.00}
\definecolor{DarkOrange4}{rgb}{0.55,0.27,0.00}
\definecolor{DarkOrange}{rgb}{1.00,0.55,0.00}
\definecolor{DarkOrchid1}{rgb}{0.75,0.24,1.00}
\definecolor{DarkOrchid2}{rgb}{0.70,0.23,0.93}
\definecolor{DarkOrchid3}{rgb}{0.60,0.20,0.80}
\definecolor{DarkOrchid4}{rgb}{0.41,0.13,0.55}
\definecolor{DarkOrchid}{rgb}{0.60,0.20,0.80}
\definecolor{DarkRed}{rgb}{0.55,0.00,0.00}
\definecolor{DarkSalmon}{rgb}{0.91,0.59,0.48}
\definecolor{DarkSeaGreen1}{rgb}{0.76,1.00,0.76}
\definecolor{DarkSeaGreen2}{rgb}{0.71,0.93,0.71}
\definecolor{DarkSeaGreen3}{rgb}{0.61,0.80,0.61}
\definecolor{DarkSeaGreen4}{rgb}{0.41,0.55,0.41}
\definecolor{DarkSeaGreen}{rgb}{0.56,0.74,0.56}
\definecolor{DarkSlateBlue}{rgb}{0.28,0.24,0.55}
\definecolor{DarkSlateGray1}{rgb}{0.59,1.00,1.00}
\definecolor{DarkSlateGray2}{rgb}{0.55,0.93,0.93}
\definecolor{DarkSlateGray3}{rgb}{0.47,0.80,0.80}
\definecolor{DarkSlateGray4}{rgb}{0.32,0.55,0.55}
\definecolor{DarkSlateGray}{rgb}{0.18,0.31,0.31}
\definecolor{DarkSlateGrey}{rgb}{0.18,0.31,0.31}
\definecolor{DarkTurquoise}{rgb}{0.00,0.81,0.82}
\definecolor{DarkViolet}{rgb}{0.58,0.00,0.83}
\definecolor{DeepPink1}{rgb}{1.00,0.08,0.58}
\definecolor{DeepPink2}{rgb}{0.93,0.07,0.54}
\definecolor{DeepPink3}{rgb}{0.80,0.06,0.46}
\definecolor{DeepPink4}{rgb}{0.55,0.04,0.31}
\definecolor{DeepPink}{rgb}{1.00,0.08,0.58}
\definecolor{DeepSkyBlue1}{rgb}{0.00,0.75,1.00}
\definecolor{DeepSkyBlue2}{rgb}{0.00,0.70,0.93}
\definecolor{DeepSkyBlue3}{rgb}{0.00,0.60,0.80}
\definecolor{DeepSkyBlue4}{rgb}{0.00,0.41,0.55}
\definecolor{DeepSkyBlue}{rgb}{0.00,0.75,1.00}
\definecolor{DimGray}{rgb}{0.41,0.41,0.41}
\definecolor{DimGrey}{rgb}{0.41,0.41,0.41}
\definecolor{DodgerBlue1}{rgb}{0.12,0.56,1.00}
\definecolor{DodgerBlue2}{rgb}{0.11,0.53,0.93}
\definecolor{DodgerBlue3}{rgb}{0.09,0.45,0.80}
\definecolor{DodgerBlue4}{rgb}{0.06,0.31,0.55}
\definecolor{DodgerBlue}{rgb}{0.12,0.56,1.00}
\definecolor{FloralWhite}{rgb}{1.00,0.98,0.94}
\definecolor{ForestGreen}{rgb}{0.13,0.55,0.13}
\definecolor{GhostWhite}{rgb}{0.97,0.97,1.00}
\definecolor{GreenYellow}{rgb}{0.68,1.00,0.18}
\definecolor{HotPink1}{rgb}{1.00,0.43,0.71}
\definecolor{HotPink2}{rgb}{0.93,0.42,0.65}
\definecolor{HotPink3}{rgb}{0.80,0.38,0.56}
\definecolor{HotPink4}{rgb}{0.55,0.23,0.38}
\definecolor{HotPink}{rgb}{1.00,0.41,0.71}
\definecolor{IndianRed1}{rgb}{1.00,0.42,0.42}
\definecolor{IndianRed2}{rgb}{0.93,0.39,0.39}
\definecolor{IndianRed3}{rgb}{0.80,0.33,0.33}
\definecolor{IndianRed4}{rgb}{0.55,0.23,0.23}
\definecolor{IndianRed}{rgb}{0.80,0.36,0.36}
\definecolor{LavenderBlush1}{rgb}{1.00,0.94,0.96}
\definecolor{LavenderBlush2}{rgb}{0.93,0.88,0.90}
\definecolor{LavenderBlush3}{rgb}{0.80,0.76,0.77}
\definecolor{LavenderBlush4}{rgb}{0.55,0.51,0.53}
\definecolor{LavenderBlush}{rgb}{1.00,0.94,0.96}
\definecolor{LawnGreen}{rgb}{0.49,0.99,0.00}
\definecolor{LemonChiffon1}{rgb}{1.00,0.98,0.80}
\definecolor{LemonChiffon2}{rgb}{0.93,0.91,0.75}
\definecolor{LemonChiffon3}{rgb}{0.80,0.79,0.65}
\definecolor{LemonChiffon4}{rgb}{0.55,0.54,0.44}
\definecolor{LemonChiffon}{rgb}{1.00,0.98,0.80}
\definecolor{LightBlue1}{rgb}{0.75,0.94,1.00}
\definecolor{LightBlue2}{rgb}{0.70,0.87,0.93}
\definecolor{LightBlue3}{rgb}{0.60,0.75,0.80}
\definecolor{LightBlue4}{rgb}{0.41,0.51,0.55}
\definecolor{LightBlue}{rgb}{0.68,0.85,0.90}
\definecolor{LightCoral}{rgb}{0.94,0.50,0.50}
\definecolor{LightCyan1}{rgb}{0.88,1.00,1.00}
\definecolor{LightCyan2}{rgb}{0.82,0.93,0.93}
\definecolor{LightCyan3}{rgb}{0.71,0.80,0.80}
\definecolor{LightCyan4}{rgb}{0.48,0.55,0.55}
\definecolor{LightCyan}{rgb}{0.88,1.00,1.00}
\definecolor{LightGoldenrod1}{rgb}{1.00,0.93,0.55}
\definecolor{LightGoldenrod2}{rgb}{0.93,0.86,0.51}
\definecolor{LightGoldenrod3}{rgb}{0.80,0.75,0.44}
\definecolor{LightGoldenrod4}{rgb}{0.55,0.51,0.30}
\definecolor{LightGoldenrodYellow}{rgb}{0.98,0.98,0.82}
\definecolor{LightGoldenrod}{rgb}{0.93,0.87,0.51}
\definecolor{LightGray}{rgb}{0.83,0.83,0.83}
\definecolor{LightGreen}{rgb}{0.56,0.93,0.56}
\definecolor{LightGrey}{rgb}{0.83,0.83,0.83}
\definecolor{LightPink1}{rgb}{1.00,0.68,0.73}
\definecolor{LightPink2}{rgb}{0.93,0.64,0.68}
\definecolor{LightPink3}{rgb}{0.80,0.55,0.58}
\definecolor{LightPink4}{rgb}{0.55,0.37,0.40}
\definecolor{LightPink}{rgb}{1.00,0.71,0.76}
\definecolor{LightSalmon1}{rgb}{1.00,0.63,0.48}
\definecolor{LightSalmon2}{rgb}{0.93,0.58,0.45}
\definecolor{LightSalmon3}{rgb}{0.80,0.51,0.38}
\definecolor{LightSalmon4}{rgb}{0.55,0.34,0.26}
\definecolor{LightSalmon}{rgb}{1.00,0.63,0.48}
\definecolor{LightSeaGreen}{rgb}{0.13,0.70,0.67}
\definecolor{LightSkyBlue1}{rgb}{0.69,0.89,1.00}
\definecolor{LightSkyBlue2}{rgb}{0.64,0.83,0.93}
\definecolor{LightSkyBlue3}{rgb}{0.55,0.71,0.80}
\definecolor{LightSkyBlue4}{rgb}{0.38,0.48,0.55}
\definecolor{LightSkyBlue}{rgb}{0.53,0.81,0.98}
\definecolor{LightSlateBlue}{rgb}{0.52,0.44,1.00}
\definecolor{LightSlateGray}{rgb}{0.47,0.53,0.60}
\definecolor{LightSlateGrey}{rgb}{0.47,0.53,0.60}
\definecolor{LightSteelBlue1}{rgb}{0.79,0.88,1.00}
\definecolor{LightSteelBlue2}{rgb}{0.74,0.82,0.93}
\definecolor{LightSteelBlue3}{rgb}{0.64,0.71,0.80}
\definecolor{LightSteelBlue4}{rgb}{0.43,0.48,0.55}
\definecolor{LightSteelBlue}{rgb}{0.69,0.77,0.87}
\definecolor{LightYellow1}{rgb}{1.00,1.00,0.88}
\definecolor{LightYellow2}{rgb}{0.93,0.93,0.82}
\definecolor{LightYellow3}{rgb}{0.80,0.80,0.71}
\definecolor{LightYellow4}{rgb}{0.55,0.55,0.48}
\definecolor{LightYellow}{rgb}{1.00,1.00,0.88}
\definecolor{LimeGreen}{rgb}{0.20,0.80,0.20}
\definecolor{MediumAquamarine}{rgb}{0.40,0.80,0.67}
\definecolor{MediumBlue}{rgb}{0.00,0.00,0.80}
\definecolor{MediumOrchid1}{rgb}{0.88,0.40,1.00}
\definecolor{MediumOrchid2}{rgb}{0.82,0.37,0.93}
\definecolor{MediumOrchid3}{rgb}{0.71,0.32,0.80}
\definecolor{MediumOrchid4}{rgb}{0.48,0.22,0.55}
\definecolor{MediumOrchid}{rgb}{0.73,0.33,0.83}
\definecolor{MediumPurple1}{rgb}{0.67,0.51,1.00}
\definecolor{MediumPurple2}{rgb}{0.62,0.47,0.93}
\definecolor{MediumPurple3}{rgb}{0.54,0.41,0.80}
\definecolor{MediumPurple4}{rgb}{0.36,0.28,0.55}
\definecolor{MediumPurple}{rgb}{0.58,0.44,0.86}
\definecolor{MediumSeaGreen}{rgb}{0.24,0.70,0.44}
\definecolor{MediumSlateBlue}{rgb}{0.48,0.41,0.93}
\definecolor{MediumSpringGreen}{rgb}{0.00,0.98,0.60}
\definecolor{MediumTurquoise}{rgb}{0.28,0.82,0.80}
\definecolor{MediumVioletRed}{rgb}{0.78,0.08,0.52}
\definecolor{MidnightBlue}{rgb}{0.10,0.10,0.44}
\definecolor{MintCream}{rgb}{0.96,1.00,0.98}
\definecolor{MistyRose1}{rgb}{1.00,0.89,0.88}
\definecolor{MistyRose2}{rgb}{0.93,0.84,0.82}
\definecolor{MistyRose3}{rgb}{0.80,0.72,0.71}
\definecolor{MistyRose4}{rgb}{0.55,0.49,0.48}
\definecolor{MistyRose}{rgb}{1.00,0.89,0.88}
\definecolor{NavajoWhite1}{rgb}{1.00,0.87,0.68}
\definecolor{NavajoWhite2}{rgb}{0.93,0.81,0.63}
\definecolor{NavajoWhite3}{rgb}{0.80,0.70,0.55}
\definecolor{NavajoWhite4}{rgb}{0.55,0.47,0.37}
\definecolor{NavajoWhite}{rgb}{1.00,0.87,0.68}
\definecolor{NavyBlue}{rgb}{0.00,0.00,0.50}
\definecolor{OldLace}{rgb}{0.99,0.96,0.90}
\definecolor{OliveDrab1}{rgb}{0.75,1.00,0.24}
\definecolor{OliveDrab2}{rgb}{0.70,0.93,0.23}
\definecolor{OliveDrab3}{rgb}{0.60,0.80,0.20}
\definecolor{OliveDrab4}{rgb}{0.41,0.55,0.13}
\definecolor{OliveDrab}{rgb}{0.42,0.56,0.14}
\definecolor{OrangeRed1}{rgb}{1.00,0.27,0.00}
\definecolor{OrangeRed2}{rgb}{0.93,0.25,0.00}
\definecolor{OrangeRed3}{rgb}{0.80,0.22,0.00}
\definecolor{OrangeRed4}{rgb}{0.55,0.15,0.00}
\definecolor{OrangeRed}{rgb}{1.00,0.27,0.00}
\definecolor{PaleGoldenrod}{rgb}{0.93,0.91,0.67}
\definecolor{PaleGreen1}{rgb}{0.60,1.00,0.60}
\definecolor{PaleGreen2}{rgb}{0.56,0.93,0.56}
\definecolor{PaleGreen3}{rgb}{0.49,0.80,0.49}
\definecolor{PaleGreen4}{rgb}{0.33,0.55,0.33}
\definecolor{PaleGreen}{rgb}{0.60,0.98,0.60}
\definecolor{PaleTurquoise1}{rgb}{0.73,1.00,1.00}
\definecolor{PaleTurquoise2}{rgb}{0.68,0.93,0.93}
\definecolor{PaleTurquoise3}{rgb}{0.59,0.80,0.80}
\definecolor{PaleTurquoise4}{rgb}{0.40,0.55,0.55}
\definecolor{PaleTurquoise}{rgb}{0.69,0.93,0.93}
\definecolor{PaleVioletRed1}{rgb}{1.00,0.51,0.67}
\definecolor{PaleVioletRed2}{rgb}{0.93,0.47,0.62}
\definecolor{PaleVioletRed3}{rgb}{0.80,0.41,0.54}
\definecolor{PaleVioletRed4}{rgb}{0.55,0.28,0.36}
\definecolor{PaleVioletRed}{rgb}{0.86,0.44,0.58}
\definecolor{PapayaWhip}{rgb}{1.00,0.94,0.84}
\definecolor{PeachPuff1}{rgb}{1.00,0.85,0.73}
\definecolor{PeachPuff2}{rgb}{0.93,0.80,0.68}
\definecolor{PeachPuff3}{rgb}{0.80,0.69,0.58}
\definecolor{PeachPuff4}{rgb}{0.55,0.47,0.40}
\definecolor{PeachPuff}{rgb}{1.00,0.85,0.73}
\definecolor{PowderBlue}{rgb}{0.69,0.88,0.90}
\definecolor{RosyBrown1}{rgb}{1.00,0.76,0.76}
\definecolor{RosyBrown2}{rgb}{0.93,0.71,0.71}
\definecolor{RosyBrown3}{rgb}{0.80,0.61,0.61}
\definecolor{RosyBrown4}{rgb}{0.55,0.41,0.41}
\definecolor{RosyBrown}{rgb}{0.74,0.56,0.56}
\definecolor{RoyalBlue1}{rgb}{0.28,0.46,1.00}
\definecolor{RoyalBlue2}{rgb}{0.26,0.43,0.93}
\definecolor{RoyalBlue3}{rgb}{0.23,0.37,0.80}
\definecolor{RoyalBlue4}{rgb}{0.15,0.25,0.55}
\definecolor{RoyalBlue}{rgb}{0.25,0.41,0.88}
\definecolor{SaddleBrown}{rgb}{0.55,0.27,0.07}
\definecolor{SandyBrown}{rgb}{0.96,0.64,0.38}
\definecolor{SeaGreen1}{rgb}{0.33,1.00,0.62}
\definecolor{SeaGreen2}{rgb}{0.31,0.93,0.58}
\definecolor{SeaGreen3}{rgb}{0.26,0.80,0.50}
\definecolor{SeaGreen4}{rgb}{0.18,0.55,0.34}
\definecolor{SeaGreen}{rgb}{0.18,0.55,0.34}
\definecolor{SkyBlue1}{rgb}{0.53,0.81,1.00}
\definecolor{SkyBlue2}{rgb}{0.49,0.75,0.93}
\definecolor{SkyBlue3}{rgb}{0.42,0.65,0.80}
\definecolor{SkyBlue4}{rgb}{0.29,0.44,0.55}
\definecolor{SkyBlue}{rgb}{0.53,0.81,0.92}
\definecolor{SlateBlue1}{rgb}{0.51,0.44,1.00}
\definecolor{SlateBlue2}{rgb}{0.48,0.40,0.93}
\definecolor{SlateBlue3}{rgb}{0.41,0.35,0.80}
\definecolor{SlateBlue4}{rgb}{0.28,0.24,0.55}
\definecolor{SlateBlue}{rgb}{0.42,0.35,0.80}
\definecolor{SlateGray1}{rgb}{0.78,0.89,1.00}
\definecolor{SlateGray2}{rgb}{0.73,0.83,0.93}
\definecolor{SlateGray3}{rgb}{0.62,0.71,0.80}
\definecolor{SlateGray4}{rgb}{0.42,0.48,0.55}
\definecolor{SlateGray}{rgb}{0.44,0.50,0.56}
\definecolor{SlateGrey}{rgb}{0.44,0.50,0.56}
\definecolor{SpringGreen1}{rgb}{0.00,1.00,0.50}
\definecolor{SpringGreen2}{rgb}{0.00,0.93,0.46}
\definecolor{SpringGreen3}{rgb}{0.00,0.80,0.40}
\definecolor{SpringGreen4}{rgb}{0.00,0.55,0.27}
\definecolor{SpringGreen}{rgb}{0.00,1.00,0.50}
\definecolor{SteelBlue1}{rgb}{0.39,0.72,1.00}
\definecolor{SteelBlue2}{rgb}{0.36,0.67,0.93}
\definecolor{SteelBlue3}{rgb}{0.31,0.58,0.80}
\definecolor{SteelBlue4}{rgb}{0.21,0.39,0.55}
\definecolor{SteelBlue}{rgb}{0.27,0.51,0.71}
\definecolor{VioletRed1}{rgb}{1.00,0.24,0.59}
\definecolor{VioletRed2}{rgb}{0.93,0.23,0.55}
\definecolor{VioletRed3}{rgb}{0.80,0.20,0.47}
\definecolor{VioletRed4}{rgb}{0.55,0.13,0.32}
\definecolor{VioletRed}{rgb}{0.82,0.13,0.56}
\definecolor{WhiteSmoke}{rgb}{0.96,0.96,0.96}
\definecolor{YellowGreen}{rgb}{0.60,0.80,0.20}
\definecolor{aliceblue}{rgb}{0.94,0.97,1.00}
\definecolor{antiquewhite}{rgb}{0.98,0.92,0.84}
\definecolor{aquamarine1}{rgb}{0.50,1.00,0.83}
\definecolor{aquamarine2}{rgb}{0.46,0.93,0.78}
\definecolor{aquamarine3}{rgb}{0.40,0.80,0.67}
\definecolor{aquamarine4}{rgb}{0.27,0.55,0.45}
\definecolor{aquamarine}{rgb}{0.50,1.00,0.83}
\definecolor{azure1}{rgb}{0.94,1.00,1.00}
\definecolor{azure2}{rgb}{0.88,0.93,0.93}
\definecolor{azure3}{rgb}{0.76,0.80,0.80}
\definecolor{azure4}{rgb}{0.51,0.55,0.55}
\definecolor{azure}{rgb}{0.94,1.00,1.00}
\definecolor{beige}{rgb}{0.96,0.96,0.86}
\definecolor{bisque1}{rgb}{1.00,0.89,0.77}
\definecolor{bisque2}{rgb}{0.93,0.84,0.72}
\definecolor{bisque3}{rgb}{0.80,0.72,0.62}
\definecolor{bisque4}{rgb}{0.55,0.49,0.42}
\definecolor{bisque}{rgb}{1.00,0.89,0.77}
\definecolor{black}{rgb}{0.00,0.00,0.00}
\definecolor{blanchedalmond}{rgb}{1.00,0.92,0.80}
\definecolor{blue1}{rgb}{0.00,0.00,1.00}
\definecolor{blue2}{rgb}{0.00,0.00,0.93}
\definecolor{blue3}{rgb}{0.00,0.00,0.80}
\definecolor{blue4}{rgb}{0.00,0.00,0.55}
\definecolor{blueviolet}{rgb}{0.54,0.17,0.89}
\definecolor{blue}{rgb}{0.00,0.00,1.00}
\definecolor{brown1}{rgb}{1.00,0.25,0.25}
\definecolor{brown2}{rgb}{0.93,0.23,0.23}
\definecolor{brown3}{rgb}{0.80,0.20,0.20}
\definecolor{brown4}{rgb}{0.55,0.14,0.14}
\definecolor{brown}{rgb}{0.65,0.16,0.16}
\definecolor{burlywood1}{rgb}{1.00,0.83,0.61}
\definecolor{burlywood2}{rgb}{0.93,0.77,0.57}
\definecolor{burlywood3}{rgb}{0.80,0.67,0.49}
\definecolor{burlywood4}{rgb}{0.55,0.45,0.33}
\definecolor{burlywood}{rgb}{0.87,0.72,0.53}
\definecolor{cadetblue}{rgb}{0.37,0.62,0.63}
\definecolor{chartreuse1}{rgb}{0.50,1.00,0.00}
\definecolor{chartreuse2}{rgb}{0.46,0.93,0.00}
\definecolor{chartreuse3}{rgb}{0.40,0.80,0.00}
\definecolor{chartreuse4}{rgb}{0.27,0.55,0.00}
\definecolor{chartreuse}{rgb}{0.50,1.00,0.00}
\definecolor{chocolate1}{rgb}{1.00,0.50,0.14}
\definecolor{chocolate2}{rgb}{0.93,0.46,0.13}
\definecolor{chocolate3}{rgb}{0.80,0.40,0.11}
\definecolor{chocolate4}{rgb}{0.55,0.27,0.07}
\definecolor{chocolate}{rgb}{0.82,0.41,0.12}
\definecolor{coral1}{rgb}{1.00,0.45,0.34}
\definecolor{coral2}{rgb}{0.93,0.42,0.31}
\definecolor{coral3}{rgb}{0.80,0.36,0.27}
\definecolor{coral4}{rgb}{0.55,0.24,0.18}
\definecolor{coral}{rgb}{1.00,0.50,0.31}
\definecolor{cornflowerblue}{rgb}{0.39,0.58,0.93}
\definecolor{cornsilk1}{rgb}{1.00,0.97,0.86}
\definecolor{cornsilk2}{rgb}{0.93,0.91,0.80}
\definecolor{cornsilk3}{rgb}{0.80,0.78,0.69}
\definecolor{cornsilk4}{rgb}{0.55,0.53,0.47}
\definecolor{cornsilk}{rgb}{1.00,0.97,0.86}
\definecolor{cyan1}{rgb}{0.00,1.00,1.00}
\definecolor{cyan2}{rgb}{0.00,0.93,0.93}
\definecolor{cyan3}{rgb}{0.00,0.80,0.80}
\definecolor{cyan4}{rgb}{0.00,0.55,0.55}
\definecolor{cyan}{rgb}{0.00,1.00,1.00}
\definecolor{darkblue}{rgb}{0.00,0.00,0.55}
\definecolor{darkcyan}{rgb}{0.00,0.55,0.55}
\definecolor{darkgoldenrod}{rgb}{0.72,0.53,0.04}
\definecolor{darkgray}{rgb}{0.66,0.66,0.66}
\definecolor{darkgreen}{rgb}{0.00,0.39,0.00}
\definecolor{darkgrey}{rgb}{0.66,0.66,0.66}
\definecolor{darkkhaki}{rgb}{0.74,0.72,0.42}
\definecolor{darkmagenta}{rgb}{0.55,0.00,0.55}
\definecolor{darkolive}{rgb}{0.33,0.42,0.18}
\definecolor{darkorange}{rgb}{1.00,0.55,0.00}
\definecolor{darkorchid}{rgb}{0.60,0.20,0.80}
\definecolor{darkred}{rgb}{0.55,0.00,0.00}
\definecolor{darksalmon}{rgb}{0.91,0.59,0.48}
\definecolor{darksea}{rgb}{0.56,0.74,0.56}
\definecolor{darkslate}{rgb}{0.18,0.31,0.31}
\definecolor{darkslate}{rgb}{0.18,0.31,0.31}
\definecolor{darkslate}{rgb}{0.28,0.24,0.55}
\definecolor{darkturquoise}{rgb}{0.00,0.81,0.82}
\definecolor{darkviolet}{rgb}{0.58,0.00,0.83}
\definecolor{deeppink}{rgb}{1.00,0.08,0.58}
\definecolor{deepsky}{rgb}{0.00,0.75,1.00}
\definecolor{dimgray}{rgb}{0.41,0.41,0.41}
\definecolor{dimgrey}{rgb}{0.41,0.41,0.41}
\definecolor{dodgerblue}{rgb}{0.12,0.56,1.00}
\definecolor{firebrick1}{rgb}{1.00,0.19,0.19}
\definecolor{firebrick2}{rgb}{0.93,0.17,0.17}
\definecolor{firebrick3}{rgb}{0.80,0.15,0.15}
\definecolor{firebrick4}{rgb}{0.55,0.10,0.10}
\definecolor{firebrick}{rgb}{0.70,0.13,0.13}
\definecolor{floralwhite}{rgb}{1.00,0.98,0.94}
\definecolor{forestgreen}{rgb}{0.13,0.55,0.13}
\definecolor{gainsboro}{rgb}{0.86,0.86,0.86}
\definecolor{ghostwhite}{rgb}{0.97,0.97,1.00}
\definecolor{gold1}{rgb}{1.00,0.84,0.00}
\definecolor{gold2}{rgb}{0.93,0.79,0.00}
\definecolor{gold3}{rgb}{0.80,0.68,0.00}
\definecolor{gold4}{rgb}{0.55,0.46,0.00}
\definecolor{goldenrod1}{rgb}{1.00,0.76,0.15}
\definecolor{goldenrod2}{rgb}{0.93,0.71,0.13}
\definecolor{goldenrod3}{rgb}{0.80,0.61,0.11}
\definecolor{goldenrod4}{rgb}{0.55,0.41,0.08}
\definecolor{goldenrod}{rgb}{0.85,0.65,0.13}
\definecolor{gold}{rgb}{1.00,0.84,0.00}
\definecolor{gray0}{rgb}{0.00,0.00,0.00}
\definecolor{gray100}{rgb}{1.00,1.00,1.00}
\definecolor{gray10}{rgb}{0.10,0.10,0.10}
\definecolor{gray11}{rgb}{0.11,0.11,0.11}
\definecolor{gray12}{rgb}{0.12,0.12,0.12}
\definecolor{gray13}{rgb}{0.13,0.13,0.13}
\definecolor{gray14}{rgb}{0.14,0.14,0.14}
\definecolor{gray15}{rgb}{0.15,0.15,0.15}
\definecolor{gray16}{rgb}{0.16,0.16,0.16}
\definecolor{gray17}{rgb}{0.17,0.17,0.17}
\definecolor{gray18}{rgb}{0.18,0.18,0.18}
\definecolor{gray19}{rgb}{0.19,0.19,0.19}
\definecolor{gray1}{rgb}{0.01,0.01,0.01}
\definecolor{gray20}{rgb}{0.20,0.20,0.20}
\definecolor{gray21}{rgb}{0.21,0.21,0.21}
\definecolor{gray22}{rgb}{0.22,0.22,0.22}
\definecolor{gray23}{rgb}{0.23,0.23,0.23}
\definecolor{gray24}{rgb}{0.24,0.24,0.24}
\definecolor{gray25}{rgb}{0.25,0.25,0.25}
\definecolor{gray26}{rgb}{0.26,0.26,0.26}
\definecolor{gray27}{rgb}{0.27,0.27,0.27}
\definecolor{gray28}{rgb}{0.28,0.28,0.28}
\definecolor{gray29}{rgb}{0.29,0.29,0.29}
\definecolor{gray2}{rgb}{0.02,0.02,0.02}
\definecolor{gray30}{rgb}{0.30,0.30,0.30}
\definecolor{gray31}{rgb}{0.31,0.31,0.31}
\definecolor{gray32}{rgb}{0.32,0.32,0.32}
\definecolor{gray33}{rgb}{0.33,0.33,0.33}
\definecolor{gray34}{rgb}{0.34,0.34,0.34}
\definecolor{gray35}{rgb}{0.35,0.35,0.35}
\definecolor{gray36}{rgb}{0.36,0.36,0.36}
\definecolor{gray37}{rgb}{0.37,0.37,0.37}
\definecolor{gray38}{rgb}{0.38,0.38,0.38}
\definecolor{gray39}{rgb}{0.39,0.39,0.39}
\definecolor{gray3}{rgb}{0.03,0.03,0.03}
\definecolor{gray40}{rgb}{0.40,0.40,0.40}
\definecolor{gray41}{rgb}{0.41,0.41,0.41}
\definecolor{gray42}{rgb}{0.42,0.42,0.42}
\definecolor{gray43}{rgb}{0.43,0.43,0.43}
\definecolor{gray44}{rgb}{0.44,0.44,0.44}
\definecolor{gray45}{rgb}{0.45,0.45,0.45}
\definecolor{gray46}{rgb}{0.46,0.46,0.46}
\definecolor{gray47}{rgb}{0.47,0.47,0.47}
\definecolor{gray48}{rgb}{0.48,0.48,0.48}
\definecolor{gray49}{rgb}{0.49,0.49,0.49}
\definecolor{gray4}{rgb}{0.04,0.04,0.04}
\definecolor{gray50}{rgb}{0.50,0.50,0.50}
\definecolor{gray51}{rgb}{0.51,0.51,0.51}
\definecolor{gray52}{rgb}{0.52,0.52,0.52}
\definecolor{gray53}{rgb}{0.53,0.53,0.53}
\definecolor{gray54}{rgb}{0.54,0.54,0.54}
\definecolor{gray55}{rgb}{0.55,0.55,0.55}
\definecolor{gray56}{rgb}{0.56,0.56,0.56}
\definecolor{gray57}{rgb}{0.57,0.57,0.57}
\definecolor{gray58}{rgb}{0.58,0.58,0.58}
\definecolor{gray59}{rgb}{0.59,0.59,0.59}
\definecolor{gray5}{rgb}{0.05,0.05,0.05}
\definecolor{gray60}{rgb}{0.60,0.60,0.60}
\definecolor{gray61}{rgb}{0.61,0.61,0.61}
\definecolor{gray62}{rgb}{0.62,0.62,0.62}
\definecolor{gray63}{rgb}{0.63,0.63,0.63}
\definecolor{gray64}{rgb}{0.64,0.64,0.64}
\definecolor{gray65}{rgb}{0.65,0.65,0.65}
\definecolor{gray66}{rgb}{0.66,0.66,0.66}
\definecolor{gray67}{rgb}{0.67,0.67,0.67}
\definecolor{gray68}{rgb}{0.68,0.68,0.68}
\definecolor{gray69}{rgb}{0.69,0.69,0.69}
\definecolor{gray6}{rgb}{0.06,0.06,0.06}
\definecolor{gray70}{rgb}{0.70,0.70,0.70}
\definecolor{gray71}{rgb}{0.71,0.71,0.71}
\definecolor{gray72}{rgb}{0.72,0.72,0.72}
\definecolor{gray73}{rgb}{0.73,0.73,0.73}
\definecolor{gray74}{rgb}{0.74,0.74,0.74}
\definecolor{gray75}{rgb}{0.75,0.75,0.75}
\definecolor{gray76}{rgb}{0.76,0.76,0.76}
\definecolor{gray77}{rgb}{0.77,0.77,0.77}
\definecolor{gray78}{rgb}{0.78,0.78,0.78}
\definecolor{gray79}{rgb}{0.79,0.79,0.79}
\definecolor{gray7}{rgb}{0.07,0.07,0.07}
\definecolor{gray80}{rgb}{0.80,0.80,0.80}
\definecolor{gray81}{rgb}{0.81,0.81,0.81}
\definecolor{gray82}{rgb}{0.82,0.82,0.82}
\definecolor{gray83}{rgb}{0.83,0.83,0.83}
\definecolor{gray84}{rgb}{0.84,0.84,0.84}
\definecolor{gray85}{rgb}{0.85,0.85,0.85}
\definecolor{gray86}{rgb}{0.86,0.86,0.86}
\definecolor{gray87}{rgb}{0.87,0.87,0.87}
\definecolor{gray88}{rgb}{0.88,0.88,0.88}
\definecolor{gray89}{rgb}{0.89,0.89,0.89}
\definecolor{gray8}{rgb}{0.08,0.08,0.08}
\definecolor{gray90}{rgb}{0.90,0.90,0.90}
\definecolor{gray91}{rgb}{0.91,0.91,0.91}
\definecolor{gray92}{rgb}{0.92,0.92,0.92}
\definecolor{gray93}{rgb}{0.93,0.93,0.93}
\definecolor{gray94}{rgb}{0.94,0.94,0.94}
\definecolor{gray95}{rgb}{0.95,0.95,0.95}
\definecolor{gray96}{rgb}{0.96,0.96,0.96}
\definecolor{gray97}{rgb}{0.97,0.97,0.97}
\definecolor{gray98}{rgb}{0.98,0.98,0.98}
\definecolor{gray99}{rgb}{0.99,0.99,0.99}
\definecolor{gray9}{rgb}{0.09,0.09,0.09}
\definecolor{gray}{rgb}{0.75,0.75,0.75}
\definecolor{green1}{rgb}{0.00,1.00,0.00}
\definecolor{green2}{rgb}{0.00,0.93,0.00}
\definecolor{green3}{rgb}{0.00,0.80,0.00}
\definecolor{green4}{rgb}{0.00,0.55,0.00}
\definecolor{greenyellow}{rgb}{0.68,1.00,0.18}
\definecolor{green}{rgb}{0.00,1.00,0.00}
\definecolor{grey0}{rgb}{0.00,0.00,0.00}
\definecolor{grey100}{rgb}{1.00,1.00,1.00}
\definecolor{grey10}{rgb}{0.10,0.10,0.10}
\definecolor{grey11}{rgb}{0.11,0.11,0.11}
\definecolor{grey12}{rgb}{0.12,0.12,0.12}
\definecolor{grey13}{rgb}{0.13,0.13,0.13}
\definecolor{grey14}{rgb}{0.14,0.14,0.14}
\definecolor{grey15}{rgb}{0.15,0.15,0.15}
\definecolor{grey16}{rgb}{0.16,0.16,0.16}
\definecolor{grey17}{rgb}{0.17,0.17,0.17}
\definecolor{grey18}{rgb}{0.18,0.18,0.18}
\definecolor{grey19}{rgb}{0.19,0.19,0.19}
\definecolor{grey1}{rgb}{0.01,0.01,0.01}
\definecolor{grey20}{rgb}{0.20,0.20,0.20}
\definecolor{grey21}{rgb}{0.21,0.21,0.21}
\definecolor{grey22}{rgb}{0.22,0.22,0.22}
\definecolor{grey23}{rgb}{0.23,0.23,0.23}
\definecolor{grey24}{rgb}{0.24,0.24,0.24}
\definecolor{grey25}{rgb}{0.25,0.25,0.25}
\definecolor{grey26}{rgb}{0.26,0.26,0.26}
\definecolor{grey27}{rgb}{0.27,0.27,0.27}
\definecolor{grey28}{rgb}{0.28,0.28,0.28}
\definecolor{grey29}{rgb}{0.29,0.29,0.29}
\definecolor{grey2}{rgb}{0.02,0.02,0.02}
\definecolor{grey30}{rgb}{0.30,0.30,0.30}
\definecolor{grey31}{rgb}{0.31,0.31,0.31}
\definecolor{grey32}{rgb}{0.32,0.32,0.32}
\definecolor{grey33}{rgb}{0.33,0.33,0.33}
\definecolor{grey34}{rgb}{0.34,0.34,0.34}
\definecolor{grey35}{rgb}{0.35,0.35,0.35}
\definecolor{grey36}{rgb}{0.36,0.36,0.36}
\definecolor{grey37}{rgb}{0.37,0.37,0.37}
\definecolor{grey38}{rgb}{0.38,0.38,0.38}
\definecolor{grey39}{rgb}{0.39,0.39,0.39}
\definecolor{grey3}{rgb}{0.03,0.03,0.03}
\definecolor{grey40}{rgb}{0.40,0.40,0.40}
\definecolor{grey41}{rgb}{0.41,0.41,0.41}
\definecolor{grey42}{rgb}{0.42,0.42,0.42}
\definecolor{grey43}{rgb}{0.43,0.43,0.43}
\definecolor{grey44}{rgb}{0.44,0.44,0.44}
\definecolor{grey45}{rgb}{0.45,0.45,0.45}
\definecolor{grey46}{rgb}{0.46,0.46,0.46}
\definecolor{grey47}{rgb}{0.47,0.47,0.47}
\definecolor{grey48}{rgb}{0.48,0.48,0.48}
\definecolor{grey49}{rgb}{0.49,0.49,0.49}
\definecolor{grey4}{rgb}{0.04,0.04,0.04}
\definecolor{grey50}{rgb}{0.50,0.50,0.50}
\definecolor{grey51}{rgb}{0.51,0.51,0.51}
\definecolor{grey52}{rgb}{0.52,0.52,0.52}
\definecolor{grey53}{rgb}{0.53,0.53,0.53}
\definecolor{grey54}{rgb}{0.54,0.54,0.54}
\definecolor{grey55}{rgb}{0.55,0.55,0.55}
\definecolor{grey56}{rgb}{0.56,0.56,0.56}
\definecolor{grey57}{rgb}{0.57,0.57,0.57}
\definecolor{grey58}{rgb}{0.58,0.58,0.58}
\definecolor{grey59}{rgb}{0.59,0.59,0.59}
\definecolor{grey5}{rgb}{0.05,0.05,0.05}
\definecolor{grey60}{rgb}{0.60,0.60,0.60}
\definecolor{grey61}{rgb}{0.61,0.61,0.61}
\definecolor{grey62}{rgb}{0.62,0.62,0.62}
\definecolor{grey63}{rgb}{0.63,0.63,0.63}
\definecolor{grey64}{rgb}{0.64,0.64,0.64}
\definecolor{grey65}{rgb}{0.65,0.65,0.65}
\definecolor{grey66}{rgb}{0.66,0.66,0.66}
\definecolor{grey67}{rgb}{0.67,0.67,0.67}
\definecolor{grey68}{rgb}{0.68,0.68,0.68}
\definecolor{grey69}{rgb}{0.69,0.69,0.69}
\definecolor{grey6}{rgb}{0.06,0.06,0.06}
\definecolor{grey70}{rgb}{0.70,0.70,0.70}
\definecolor{grey71}{rgb}{0.71,0.71,0.71}
\definecolor{grey72}{rgb}{0.72,0.72,0.72}
\definecolor{grey73}{rgb}{0.73,0.73,0.73}
\definecolor{grey74}{rgb}{0.74,0.74,0.74}
\definecolor{grey75}{rgb}{0.75,0.75,0.75}
\definecolor{grey76}{rgb}{0.76,0.76,0.76}
\definecolor{grey77}{rgb}{0.77,0.77,0.77}
\definecolor{grey78}{rgb}{0.78,0.78,0.78}
\definecolor{grey79}{rgb}{0.79,0.79,0.79}
\definecolor{grey7}{rgb}{0.07,0.07,0.07}
\definecolor{grey80}{rgb}{0.80,0.80,0.80}
\definecolor{grey81}{rgb}{0.81,0.81,0.81}
\definecolor{grey82}{rgb}{0.82,0.82,0.82}
\definecolor{grey83}{rgb}{0.83,0.83,0.83}
\definecolor{grey84}{rgb}{0.84,0.84,0.84}
\definecolor{grey85}{rgb}{0.85,0.85,0.85}
\definecolor{grey86}{rgb}{0.86,0.86,0.86}
\definecolor{grey87}{rgb}{0.87,0.87,0.87}
\definecolor{grey88}{rgb}{0.88,0.88,0.88}
\definecolor{grey89}{rgb}{0.89,0.89,0.89}
\definecolor{grey8}{rgb}{0.08,0.08,0.08}
\definecolor{grey90}{rgb}{0.90,0.90,0.90}
\definecolor{grey91}{rgb}{0.91,0.91,0.91}
\definecolor{grey92}{rgb}{0.92,0.92,0.92}
\definecolor{grey93}{rgb}{0.93,0.93,0.93}
\definecolor{grey94}{rgb}{0.94,0.94,0.94}
\definecolor{grey95}{rgb}{0.95,0.95,0.95}
\definecolor{grey96}{rgb}{0.96,0.96,0.96}
\definecolor{grey97}{rgb}{0.97,0.97,0.97}
\definecolor{grey98}{rgb}{0.98,0.98,0.98}
\definecolor{grey99}{rgb}{0.99,0.99,0.99}
\definecolor{grey9}{rgb}{0.09,0.09,0.09}
\definecolor{grey}{rgb}{0.75,0.75,0.75}
\definecolor{honeydew1}{rgb}{0.94,1.00,0.94}
\definecolor{honeydew2}{rgb}{0.88,0.93,0.88}
\definecolor{honeydew3}{rgb}{0.76,0.80,0.76}
\definecolor{honeydew4}{rgb}{0.51,0.55,0.51}
\definecolor{honeydew}{rgb}{0.94,1.00,0.94}
\definecolor{hotpink}{rgb}{1.00,0.41,0.71}
\definecolor{indianred}{rgb}{0.80,0.36,0.36}
\definecolor{ivory1}{rgb}{1.00,1.00,0.94}
\definecolor{ivory2}{rgb}{0.93,0.93,0.88}
\definecolor{ivory3}{rgb}{0.80,0.80,0.76}
\definecolor{ivory4}{rgb}{0.55,0.55,0.51}
\definecolor{ivory}{rgb}{1.00,1.00,0.94}
\definecolor{khaki1}{rgb}{1.00,0.96,0.56}
\definecolor{khaki2}{rgb}{0.93,0.90,0.52}
\definecolor{khaki3}{rgb}{0.80,0.78,0.45}
\definecolor{khaki4}{rgb}{0.55,0.53,0.31}
\definecolor{khaki}{rgb}{0.94,0.90,0.55}
\definecolor{lavenderblush}{rgb}{1.00,0.94,0.96}
\definecolor{lavender}{rgb}{0.90,0.90,0.98}
\definecolor{lawngreen}{rgb}{0.49,0.99,0.00}
\definecolor{lemonchiffon}{rgb}{1.00,0.98,0.80}
\definecolor{lightblue}{rgb}{0.68,0.85,0.90}
\definecolor{lightcoral}{rgb}{0.94,0.50,0.50}
\definecolor{lightcyan}{rgb}{0.88,1.00,1.00}
\definecolor{lightgoldenrod}{rgb}{0.93,0.87,0.51}
\definecolor{lightgoldenrod}{rgb}{0.98,0.98,0.82}
\definecolor{lightgray}{rgb}{0.83,0.83,0.83}
\definecolor{lightgreen}{rgb}{0.56,0.93,0.56}
\definecolor{lightgrey}{rgb}{0.83,0.83,0.83}
\definecolor{lightpink}{rgb}{1.00,0.71,0.76}
\definecolor{lightsalmon}{rgb}{1.00,0.63,0.48}
\definecolor{lightsea}{rgb}{0.13,0.70,0.67}
\definecolor{lightsky}{rgb}{0.53,0.81,0.98}
\definecolor{lightslate}{rgb}{0.47,0.53,0.60}
\definecolor{lightslate}{rgb}{0.47,0.53,0.60}
\definecolor{lightslate}{rgb}{0.52,0.44,1.00}
\definecolor{lightsteel}{rgb}{0.69,0.77,0.87}
\definecolor{lightyellow}{rgb}{1.00,1.00,0.88}
\definecolor{limegreen}{rgb}{0.20,0.80,0.20}
\definecolor{linen}{rgb}{0.98,0.94,0.90}
\definecolor{magenta1}{rgb}{1.00,0.00,1.00}
\definecolor{magenta2}{rgb}{0.93,0.00,0.93}
\definecolor{magenta3}{rgb}{0.80,0.00,0.80}
\definecolor{magenta4}{rgb}{0.55,0.00,0.55}
\definecolor{magenta}{rgb}{1.00,0.00,1.00}
\definecolor{maroon1}{rgb}{1.00,0.20,0.70}
\definecolor{maroon2}{rgb}{0.93,0.19,0.65}
\definecolor{maroon3}{rgb}{0.80,0.16,0.56}
\definecolor{maroon4}{rgb}{0.55,0.11,0.38}
\definecolor{maroon}{rgb}{0.69,0.19,0.38}
\definecolor{mediumaquamarine}{rgb}{0.40,0.80,0.67}
\definecolor{mediumblue}{rgb}{0.00,0.00,0.80}
\definecolor{mediumorchid}{rgb}{0.73,0.33,0.83}
\definecolor{mediumpurple}{rgb}{0.58,0.44,0.86}
\definecolor{mediumsea}{rgb}{0.24,0.70,0.44}
\definecolor{mediumslate}{rgb}{0.48,0.41,0.93}
\definecolor{mediumspring}{rgb}{0.00,0.98,0.60}
\definecolor{mediumturquoise}{rgb}{0.28,0.82,0.80}
\definecolor{mediumviolet}{rgb}{0.78,0.08,0.52}
\definecolor{midnightblue}{rgb}{0.10,0.10,0.44}
\definecolor{mintcream}{rgb}{0.96,1.00,0.98}
\definecolor{mistyrose}{rgb}{1.00,0.89,0.88}
\definecolor{moccasin}{rgb}{1.00,0.89,0.71}
\definecolor{navajowhite}{rgb}{1.00,0.87,0.68}
\definecolor{navyblue}{rgb}{0.00,0.00,0.50}
\definecolor{navy}{rgb}{0.00,0.00,0.50}
\definecolor{oldlace}{rgb}{0.99,0.96,0.90}
\definecolor{olivedrab}{rgb}{0.42,0.56,0.14}
\definecolor{orange1}{rgb}{1.00,0.65,0.00}
\definecolor{orange2}{rgb}{0.93,0.60,0.00}
\definecolor{orange3}{rgb}{0.80,0.52,0.00}
\definecolor{orange4}{rgb}{0.55,0.35,0.00}
\definecolor{orangered}{rgb}{1.00,0.27,0.00}
\definecolor{orange}{rgb}{1.00,0.65,0.00}
\definecolor{orchid1}{rgb}{1.00,0.51,0.98}
\definecolor{orchid2}{rgb}{0.93,0.48,0.91}
\definecolor{orchid3}{rgb}{0.80,0.41,0.79}
\definecolor{orchid4}{rgb}{0.55,0.28,0.54}
\definecolor{orchid}{rgb}{0.85,0.44,0.84}
\definecolor{palegoldenrod}{rgb}{0.93,0.91,0.67}
\definecolor{palegreen}{rgb}{0.60,0.98,0.60}
\definecolor{paleturquoise}{rgb}{0.69,0.93,0.93}
\definecolor{paleviolet}{rgb}{0.86,0.44,0.58}
\definecolor{papayawhip}{rgb}{1.00,0.94,0.84}
\definecolor{peachpuff}{rgb}{1.00,0.85,0.73}
\definecolor{peru}{rgb}{0.80,0.52,0.25}
\definecolor{pink1}{rgb}{1.00,0.71,0.77}
\definecolor{pink2}{rgb}{0.93,0.66,0.72}
\definecolor{pink3}{rgb}{0.80,0.57,0.62}
\definecolor{pink4}{rgb}{0.55,0.39,0.42}
\definecolor{pink}{rgb}{1.00,0.75,0.80}
\definecolor{plum1}{rgb}{1.00,0.73,1.00}
\definecolor{plum2}{rgb}{0.93,0.68,0.93}
\definecolor{plum3}{rgb}{0.80,0.59,0.80}
\definecolor{plum4}{rgb}{0.55,0.40,0.55}
\definecolor{plum}{rgb}{0.87,0.63,0.87}
\definecolor{powderblue}{rgb}{0.69,0.88,0.90}
\definecolor{purple1}{rgb}{0.61,0.19,1.00}
\definecolor{purple2}{rgb}{0.57,0.17,0.93}
\definecolor{purple3}{rgb}{0.49,0.15,0.80}
\definecolor{purple4}{rgb}{0.33,0.10,0.55}
\definecolor{purple}{rgb}{0.63,0.13,0.94}
\definecolor{red1}{rgb}{1.00,0.00,0.00}
\definecolor{red2}{rgb}{0.93,0.00,0.00}
\definecolor{red3}{rgb}{0.80,0.00,0.00}
\definecolor{red4}{rgb}{0.55,0.00,0.00}
\definecolor{red}{rgb}{1.00,0.00,0.00}
\definecolor{rosybrown}{rgb}{0.74,0.56,0.56}
\definecolor{royalblue}{rgb}{0.25,0.41,0.88}
\definecolor{saddlebrown}{rgb}{0.55,0.27,0.07}
\definecolor{salmon1}{rgb}{1.00,0.55,0.41}
\definecolor{salmon2}{rgb}{0.93,0.51,0.38}
\definecolor{salmon3}{rgb}{0.80,0.44,0.33}
\definecolor{salmon4}{rgb}{0.55,0.30,0.22}
\definecolor{salmon}{rgb}{0.98,0.50,0.45}
\definecolor{sandybrown}{rgb}{0.96,0.64,0.38}
\definecolor{seagreen}{rgb}{0.18,0.55,0.34}
\definecolor{seashell1}{rgb}{1.00,0.96,0.93}
\definecolor{seashell2}{rgb}{0.93,0.90,0.87}
\definecolor{seashell3}{rgb}{0.80,0.77,0.75}
\definecolor{seashell4}{rgb}{0.55,0.53,0.51}
\definecolor{seashell}{rgb}{1.00,0.96,0.93}
\definecolor{sienna1}{rgb}{1.00,0.51,0.28}
\definecolor{sienna2}{rgb}{0.93,0.47,0.26}
\definecolor{sienna3}{rgb}{0.80,0.41,0.22}
\definecolor{sienna4}{rgb}{0.55,0.28,0.15}
\definecolor{sienna}{rgb}{0.63,0.32,0.18}
\definecolor{skyblue}{rgb}{0.53,0.81,0.92}
\definecolor{slateblue}{rgb}{0.42,0.35,0.80}
\definecolor{slategray}{rgb}{0.44,0.50,0.56}
\definecolor{slategrey}{rgb}{0.44,0.50,0.56}
\definecolor{snow1}{rgb}{1.00,0.98,0.98}
\definecolor{snow2}{rgb}{0.93,0.91,0.91}
\definecolor{snow3}{rgb}{0.80,0.79,0.79}
\definecolor{snow4}{rgb}{0.55,0.54,0.54}
\definecolor{snow}{rgb}{1.00,0.98,0.98}
\definecolor{springgreen}{rgb}{0.00,1.00,0.50}
\definecolor{steelblue}{rgb}{0.27,0.51,0.71}
\definecolor{tan1}{rgb}{1.00,0.65,0.31}
\definecolor{tan2}{rgb}{0.93,0.60,0.29}
\definecolor{tan3}{rgb}{0.80,0.52,0.25}
\definecolor{tan4}{rgb}{0.55,0.35,0.17}
\definecolor{tan}{rgb}{0.82,0.71,0.55}
\definecolor{thistle1}{rgb}{1.00,0.88,1.00}
\definecolor{thistle2}{rgb}{0.93,0.82,0.93}
\definecolor{thistle3}{rgb}{0.80,0.71,0.80}
\definecolor{thistle4}{rgb}{0.55,0.48,0.55}
\definecolor{thistle}{rgb}{0.85,0.75,0.85}
\definecolor{tomato1}{rgb}{1.00,0.39,0.28}
\definecolor{tomato2}{rgb}{0.93,0.36,0.26}
\definecolor{tomato3}{rgb}{0.80,0.31,0.22}
\definecolor{tomato4}{rgb}{0.55,0.21,0.15}
\definecolor{tomato}{rgb}{1.00,0.39,0.28}
\definecolor{turquoise1}{rgb}{0.00,0.96,1.00}
\definecolor{turquoise2}{rgb}{0.00,0.90,0.93}
\definecolor{turquoise3}{rgb}{0.00,0.77,0.80}
\definecolor{turquoise4}{rgb}{0.00,0.53,0.55}
\definecolor{turquoise}{rgb}{0.25,0.88,0.82}
\definecolor{violetred}{rgb}{0.82,0.13,0.56}
\definecolor{violet}{rgb}{0.93,0.51,0.93}
\definecolor{wheat1}{rgb}{1.00,0.91,0.73}
\definecolor{wheat2}{rgb}{0.93,0.85,0.68}
\definecolor{wheat3}{rgb}{0.80,0.73,0.59}
\definecolor{wheat4}{rgb}{0.55,0.49,0.40}
\definecolor{wheat}{rgb}{0.96,0.87,0.70}
\definecolor{whitesmoke}{rgb}{0.96,0.96,0.96}
\definecolor{white}{rgb}{1.00,1.00,1.00}
\definecolor{yellow1}{rgb}{1.00,1.00,0.00}
\definecolor{yellow2}{rgb}{0.93,0.93,0.00}
\definecolor{yellow3}{rgb}{0.80,0.80,0.00}
\definecolor{yellow4}{rgb}{0.55,0.55,0.00}
\definecolor{yellowgreen}{rgb}{0.60,0.80,0.20}
\definecolor{yellow}{rgb}{1.00,1.00,0.00}
\numberwithin{equation}{section}
\newtheorem{thm}{Theorem}[section]
\newtheorem{cor}[thm]{Corollary}
\newtheorem{lem}[thm]{Lemma}
\newtheorem{prop}[thm]{Proposition}
\theoremstyle{definition}
\newtheorem{defn}[thm]{Definition}
\newcommand{\lxr}{\longrightarrow}
\newcommand{\mr}{\mathsf{r}}
\newcommand{\mq}{\mathsf{q}}
\newcommand{\mi}{\mathsf{i}}
\newcommand{\ml}{\mathsf{l}}
\newcommand{\me}{\mathsf{e}}
\newcommand{\map}{\mathsf{p}}
 \DeclareMathOperator{\inc}{\mathsf{inc}}
\DeclareMathOperator{\Hom}{\mathsf{Hom}}
\newsavebox{\proofbox}
\savebox{\proofbox}{\begin{picture}(7,7)%
  \put(0,0){\framebox(7,7){}}\end{picture}}
\begin{document}

\title[]{(Gorenstein) silting modules in recollements}
\author[]{Nan Gao$^{*}$,  Jing Ma
}
\address{Department of Mathematics, Shanghai University, Shanghai 200444, PR China}
\thanks{2010 Mathematics Subject Classification. 18G80, 16S90, 16G10, 16E10.}
\thanks{* is the corresponding author.}
\email{nangao@shu.edu.cn,  majingmmmm@shu.edu.cn}
\thanks{Supported by the National Natural Science Foundation of China (Grant No. 11771272 and 11871326).}

\date{\today}

\keywords{Silting module; Gorenstein silting module; Recollement}

\subjclass[2010]{}

\begin{abstract} \ In the paper, we focus on the silting properties and the combinatorial properties of silting and Gorenstein, which is called Gorenstein silting, where the main tools used are recollements of module categories and tensor products. For a ring $A$ and its idempotent ideal $J$, we show that an $A/J\mbox{-}$module $T$ is a silting $A$-module if and only if $T$ is a silting $A/J\mbox{-}$module. For the finite dimensional $k\mbox{-}$algebras, with $k$ a field, we show that the tensor products of silting modules are still silting. We also show that the (partial) Gorenstein silting properties can be glued by the recollements of module categories of Noetherian rings. As a consequence, we glue the Gorenstein silting modules of an upper triangular matrix Gorenstein ring by those of the involved rings.
\end{abstract}

\maketitle


\vskip 20pt

\section{\bf Introduction}

\vskip 10pt

Silting modules over an arbitrary ring were introduced by Angeleri H\"{u}gel-Marks-Vit\'{o}ria \cite{AMV1}, which are intended to generalize tilting modules. In particular, silting modules coincide with support $\tau$-tilting modules, which were introduced by Adachi-Iyama-Reiten \cite{AIR}, in the category of finitely generated modules of finite dimensional algebras. In \cite{AMV1}, it is also proved how silting modules relate with 2-term silting complexes. Silting complexes were first introduced by Keller and Vossieck \cite{KV} to study $t\mbox{-}$structures in the bounded derived category of representations of Dynkin quivers. Silting modules share many properties with tilting modules and support $\tau$-tilting modules, so it has been studied and concerned by many scholars. The subject has been developed to an advanced level, see for examples(\cite{MS, AMV2, A, BZ1, BZ2, HKM, MM, KY}).

\vskip 10pt

The study of Gorenstein homological algebra is due to Enochs and Jenda \cite{EJ1}. They introduced the concept of Gorenstein-projective modules, which are as a generalization of finitely generated modules of G-dimension zero over a two-sided Noetherian ring, in the sense of Auslander and Bridger \cite{AB}. The main idea of Gorenstein homological algebra is to replace projective modules by Gorenstein-projective modules, which is useful to study some Gorenstein properties. Gao-Ma-Zhang \cite{GMZ} studied Gorenstein silting modules, intending to understand silting modules more comprehensively and fully in the Gorenstein homological algebra, and also to fuse the properties of silting modules and Gorenstein-projective modules. Although the usual silting modules have been used effectively in Gorenstein homological algebra, Gorenstein silting  modules has some advantages in the relative setting. For examples, the Gorenstein silting module is the module-theoretic counterpart of the 2-term Gorenstein silting complex (\cite{CW, GMZ}), which generates the bounded homotopy category of Gorenstein-projective modules.

\vskip 10pt

Based on these works, we aim to consider fundamental problems that how the silting (resp. Gorenstein silting) property transfers and glues in the recollements of module categories, and that how the silting property is preserved under the tensor product.

\vskip 10pt

For module categories of rings, each idempotent $e$ in a ring $A$ provides natural analogues of Grothendieck's six functors, defining recollements of module categories
\[
\xymatrix@C=0.5cm{
{A/AeA}\mbox{-}{\rm Mod} \ar[rrr]^{\inc} &&& {A}\mbox{-}{\rm Mod} \ar[rrr]^{e(-)} \ar
@/_1.5pc/[lll]_{A/AeA\otimes_A-}  \ar
 @/^1.5pc/[lll]^{\Hom_A(A/AeA,-)} &&& {eAe}\mbox{-}{\rm Mod}.
\ar @/_1.5pc/[lll]_{Ae\otimes_{eAe}-} \ar
 @/^1.5pc/[lll]^{\Hom_{eAe}(eA,-)}
 }
\]
These, and more generally recollements of abelian
categories introduced by Beilinson-Bernstein-Deligne \cite{BBD} have been used in various contexts (see for instance \cite{Buchweitz, Pira, Psaroud:survey, FZ, GKP}).

\vskip 10pt

In the paper, we give the answers by the tools of recollements and tensor products. Our main results are the follows:

\vskip 5pt

{\bf Theorem }\  The following statements hold.

\vskip 5pt

 (Theorem~\ref{cosiltingstratifying}) \ Let $A$ be a ring and $J$ the idempotent ideal of $A$. Let $T$ be a left $A/J\mbox{-}$module. Then $T$ is a silting $A$-module  if and only if $T$  is a silting $A/J$-module.

\vskip 5pt

(Theorem~\ref{tpofsilting})\ Let $A$ and $B$ be two finite dimensional $k$-algebras over a field $k$. Suppose that $T$ is a silting $A\mbox{-}$module and $S$ is a silting $B\mbox{-}$module. Then $T\otimes_{k} S$ is a silting $A\otimes_{k} B\mbox{-}$module.

\vskip 5pt

(Theorem~\ref{Gsilting})\ Let $\Gamma=\begin{pmatrix}\begin{smallmatrix}
A & N \\
0 & B \\
\end{smallmatrix}\end{pmatrix}$ be a Gorenstein ring such that ${\rm gl.dim}A<\infty$ and $_{A}N$ and $N_{B}$ are projective.
Let $X$ be an $A\mbox{-}$module and $Y$ a $B\mbox{-}$module. Then the following are equivalent:
\vskip 5pt
\begin{enumerate}
\item {\rm (a)}\ $(X, 0)\oplus (N\otimes_{B}Y, Y)$ is a Gorenstein silting $\Gamma\mbox{-}$module;

\vskip 5pt

\noindent{\rm (b)}\ there exists a $G\mbox{-}$exact sequence $(P, 0)\oplus (N\otimes_{B}E, E)\stackrel{\lambda}{\lxr} (X_{0}, 0)\oplus (N\otimes_{B}Y_{0}, Y_{0})\lxr (X_{-1}, 0)\oplus (N\otimes_{B}Y_{-1}, Y_{-1})\lxr 0$ with $\lambda=\begin{pmatrix}\begin{smallmatrix}
(\phi, 0) & 0 \\
0 & ({\rm Id}_{N}\otimes \psi, \psi) \\
\end{smallmatrix}\end{pmatrix}$, $X_{i}\in {\rm Add}X$ and $Y_{i}\in {\rm Add}Y$ such that $\lambda$ is the left $D_{\theta}\mbox{-}$approximation, $i=-1,0$, for every Gorenstein-projective $\Gamma\mbox{-}$module $(P, 0)\oplus (N\otimes_{B}E, E)$.

\vskip 5pt

\item {\rm (c)}\ $X$ is a Gorenstein silting $A\mbox{-}$module and $N\otimes_{B}Y\in {\rm Gen}_{G}X$;

\vskip 5pt

\noindent{\rm (d)}\ $Y$ is a Gorenstein silting $B\mbox{-}$module;

\vskip 5pt

\noindent{\rm (e)}\ there exists an exact sequence $P\stackrel{\phi}{\lxr} X_{0}\lxr X_{-1}\lxr 0$ with $X_{0}$ and $X_{-1}$ in ${\rm Add}X$ such that $\phi$ is the left $D_{\theta_{X}}\mbox{-}$approximation, for each $P\in A\mbox{-}{\rm P}$;

\vskip 5pt

\noindent{\rm (f)}\ there exists a $G\mbox{-}$exact sequence $E\stackrel{\psi}{\lxr} Y_{0}\lxr Y_{-1}\lxr 0$ with $Y_{0}$ and $Y_{-1}$ in ${\rm Add}Y$ such that $\psi$ is the left $D_{\theta_{Y}}\mbox{-}$approximation, for each $E\in B\mbox{-}{\rm GP}$.
\end{enumerate}

\vskip 20pt

\section{\bf Preliminaries}

\vskip 10pt

In this section, we recall some basic definitions and facts that will be used throughout the paper.

\vskip 10pt

Let $A$ be an associative ring and $A\mbox{-}{\rm Mod}$  the category of  left modules.
Denote by $A\mbox{-}{\rm P}$ the full subcategories of $A\mbox{-}{\rm Mod}$ consisting of projective modules.
A module $G$ of $A\mbox{-}{\rm Mod}$ is Gorenstein-projective if there is an exact sequence
$$\cdots \lxr P^{-1}\lxr P^{0}\stackrel{d^{0}}{\lxr} P^{1}\lxr \cdots$$
of projective modules of $A\mbox{-}{\rm Mod}$, which stays exact after applying ${\rm Hom}_{A}(-, P)$ for each projective module $P$, such that $G\cong {\rm Ker}d^{0}$ (see \cite{EJ1, EJ2}). Denote by $A\mbox{-}{\rm GP}$  the full subcategories of $A\mbox{-}{\rm Mod}$ consisting of Gorenstein-projective modules.

\vskip 10pt

Given subcategories $\mathcal{X},\ \mathcal{Y}$ of $A\mbox{-}{\rm Mod}$.  Following~\cite{AS}, a morphism $f:X\lxr M$ with $X\in \mathcal{X}$ is called a right $\mathcal{X}\mbox{-}$approximation of $M$ in $A\mbox{-}{\rm Mod}$ if any morphism from a module in $\mathcal{X}$ to $M$ factors through $f$. $\mathcal{X}$ is called contravariantly finite if any module in $A\mbox{-}{\rm Mod}$ admits a right $\mathcal{X}\mbox{-}$approximation.

\vskip 10pt

Following~\cite{EJ2}, an exact sequence $G_{1}\xrightarrow{d^{1}} G_{0}\lxr M\lxr 0 \ (*)$ is called a proper Gorenstein-projective presentation of $M$ if each $G_{i}$ is Gorenstein-projective and ${\rm Hom}_{A}(G,$ $G_{1})\lxr {\rm Hom}_{A}(G,G_{0}) \lxr {\rm Hom}_{A}(G,T)\lxr 0$ is exact for any Gorenstein-projective module $G$. $(*)$ is minimal if $G_{1}\lxr {\rm Im}d^{1}$ and $G_{0}\lxr M$ are right $A\mbox{-}{\rm GP}\mbox{-}$approximation.  Moreover, the exact sequence $0\lxr G_{n}\lxr \ldots \lxr G_{1} \lxr G_{0} \rightarrow M \rightarrow 0$  is called a proper Gorenstein-projective resolution of $M$ of length $n$ for some non-negative integer $n$, if each $G_{i}$ is all Gorenstein-projective and $0\lxr {\rm Hom}_{A}(G, G_{n})\lxr \cdots \lxr {\rm Hom}_{A}(G,G_{0}) \lxr {\rm Hom}_{A}(G,M)\lxr 0$ is exact for any Gorenstein-projective module $G$.

\vskip 10pt

\subsection{\bf Triangular matrix rings}\ Let $A$ and $B$ be rings, $_{A}N_{B}$ an $(A, B)\mbox{-}$bimodule. Then the triangular matrix ring
$$\Gamma:=\begin{pmatrix}
A & N \\
0 & B \\
\end{pmatrix}$$
can be defined by the ordinary operations on matrices.

\vskip 5pt

Recall from \cite{ARS,Gr} that a left $\Gamma\mbox{-}$module can be identified with a triple $(X, Y, f)$, or simply $(X, Y)$ if $f$ is clear, where $X\in A\mbox{-}{\rm Mod},\ Y\in B\mbox{-}{\rm Mod}$, and $f: N\otimes_{B}Y\lxr X$ is an $A\mbox{-}$map. A $\Gamma\mbox{-}$map $(X, Y, f)\lxr (X', Y', f')$ will be identified with a pair $(a, b)$, where $a\in {\rm Hom}_{A}(X, X'),\ b\in {\rm Hom}_{B}(Y, Y')$, such that the following diagram commutes:
\[
\xymatrix@C=50pt{
N\otimes_{B}Y \ar[r]^{f}\ar[d]^{{\rm Id}_{N}\otimes b}  &  X \ar[d]^{a}    \\
N\otimes_{B}Y' \ar[r]^{f'}  &  X'.      }
\]
A sequence $0\lxr (X_{1}, Y_{1}, f_{1})\lxr (X_{2}, Y_{2}, f_{2})\lxr (X_{3}, Y_{3}, f_{3})\lxr 0$ in $\Gamma\mbox{-}{\rm Mod}$ is exact if and only if $0\lxr X_{1}\lxr X_{2}\lxr X_{3}\lxr 0$ and $0\lxr Y_{1}\lxr Y_{2}\lxr Y_{3}\lxr 0$ are exact. Let $\Gamma$ be a Noetherian ring. Indecomposable projective $\Gamma\mbox{-}$modules are exactly $(P, 0)$ and $(N\otimes_{B}Q, Q)$, where $P$ runs over indecomposable projective $A\mbox{-}$modules, and $Q$ runs over indecomposable projective $B\mbox{-}$modules.

\vskip 10pt

\subsection{\bf Silting modules}\ Let $A$ be an associative ring.  We  denote by ${\rm Add}T$  the class of all modules which are isomorphic to direct summands of direct sums  of copies of $T$, for an $A\mbox{-}$module $T$.

\vskip 10pt

Let $\sigma: P_{1}\lxr P_{0}$ be a morphism of projective $A\mbox{-}$modules. Consider the class:
$$D_{\sigma}:=\{X\in A\mbox{-}{\rm Mod}\mid {\rm Hom}_{A}(\sigma, X) \ is \ an \ epimorphism\}.$$

\vskip 5pt

\begin{defn}\ (\cite[Definition 3.7]{AMV1}) \ We call  that an $A$-module $X$ is

\vskip 5pt

$\bullet$ \ partial silting if there is a projective presentation $\sigma$ of $X$ such that

\vskip 5pt

\ \ (S1) $D_{\sigma}$ is a torsion class (i.e. closed for epimorphic images, extensions and coproducts);

\vskip 5pt

\ \ (S2) $X$ lies in $D_{\sigma}$.

\vskip 5pt

$\bullet$ \ silting if there is a projective presentation $\sigma$ of $X$ such that ${\rm Gen}X=D_{\sigma}$, where ${\rm Gen}X$ is the subcategory of  all epimorphic images of modules in ${\rm Add}X$.
\end{defn}

\vskip 10pt

\subsection{\bf Gorenstein silting modules}\ Let $A$ be a Noetherian ring, and $A\mbox{-}{\rm Mod}$ the category of all left $R\mbox{-}$modules. We denote by $A\mbox{-}{\rm GP}$ the full subcategory of $A\mbox{-}{\rm Mod}$ consisting of Gorenstein-projective modules. Assume that $A\mbox{-}{\rm GP}$ is contravariantly finite in $A\mbox{-}{\rm Mod}$. For $A\mbox{-}$modules $M$ and $N$, we compute right derived functors of ${\rm Hom}_{A}(M, N)$ using a proper Gorenstein-projective resolution of $M$ (\cite{EJ2}, \cite{Hol}). We will denote these derived functors by ${\rm Gext}_{A}^{i}(M, N)$. A short exact sequence $0\lxr M\lxr N\lxr L\lxr 0$ is  $G\mbox{-}$exact if and only if it is in ${\rm Gext}_{A}^{1}(L, M)$.

\vskip 5pt

Let $T$ be an $A$-module. Put
$${\rm Gen}_{G}(T):=\{M\in A\mbox{-}{\rm Mod}|\exists \ a \ G\mbox{-}exact \ sequence \ T_{0}\lxr M\lxr 0 \ with \ T_{0}\in {\rm Add}T\}.$$

\vskip 5pt

For a morphism $\theta: G_{1}\lxr G_{0}$ with Gorenstein-projective modules $G_{1}$ and $G_{0}$. We consider the class of $A\mbox{-}$modules
$$D_{\theta}:=\{X\in A\mbox{-}{\rm Mod}\mid {\rm Hom}_{A}(\theta, X) \ {\rm is \ epic}\}.$$

\vskip 5pt

\begin{defn}{\rm (\cite[Definition 2.2]{GMZ})}\ We say that an $A\mbox{-}$module $T$ is

\vskip 5pt

$\bullet$ \ {\bf partial Gorenstein silting} if there is a proper Gorenstein-projective presentation $\theta$ of $T$ such that

\vskip 5pt

\ \ (Gs1) $D_{\theta}$ is a relative torsion class (i.e. closed for $G\mbox{-}$epimorphic images, $G\mbox{-}$extensions and coproducts);

\vskip 5pt

\ \ (Gs2) $T$ lies in $D_{\theta}$.

\vskip 5pt

$\bullet$ \ {\bf Gorenstein silting} if there is a proper Gorenstein-projective presentation $\theta$ of $T$ such that ${\rm Gen}_{G}(T)=D_{\theta}$.
\end{defn}

\vskip 5pt

From \cite[Lemma 2.1]{GMZ}, $D_{\theta}$ is always closed under $G\mbox{-}$epimorphic images, $G\mbox{-}$extensions. Hence $D_{\theta}$ is a relative torsion class if and only if it is closed under coproducts.

\vskip 20pt

\section{\bf Transfer property of silting modules}

\vskip 10pt

In this section, we focus on the transferring properties of silting modules, where the main tools are recollements of module categories and tensor products.

\vskip 10pt

\subsection{\bf Silting modules and idempotent ideals}\ In this subsection, we show that an $A/J\mbox{-}$module $T$ is a silting $A$-module  if and only if $T$  is a silting $A/J$-module whenever $J$ is an idempotent ideal of a ring $A$.

\vskip 10pt

\begin{lem}
\label{silreco}
Let $A,B$ and $C$ be three rings such that there exists the  recollement of module categories
\[
\xymatrix@C=0.5cm{
\ A\mbox{-}{\rm Mod} \ar[rrr]^{\mathsf{i}} &&& B\mbox{-}{\rm Mod} \ar[rrr]^{\mathsf{e}}  \ar @/_1.5pc/[lll]_{\mathsf{q}}  \ar
 @/^1.5pc/[lll]^{\mathsf{p}} &&& C\mbox{-}{\rm Mod}
\ar @/_1.5pc/[lll]_{\mathsf{l}} \ar
 @/^1.5pc/[lll]^{\mathsf{r}}
 }
\]
Then the following hold.

\vskip 5pt

\begin{enumerate}
\item \ $T$ is a silting  $A\mbox{-}$module  if and only if $\mathsf{i}(T)$ is a silting  $B\mbox{-}$module.

\vskip 5pt

\item If $T$ is a silting $B\mbox{-}$module, then $\mathsf{q}(T)$ is a silting $A\mbox{-}$module.

\end{enumerate}
\begin{proof}\ (i). Suppose that $P_{1}\stackrel{\sigma}{\longrightarrow} P_{0}\longrightarrow \mathsf{i}(T)\longrightarrow 0$ is a projective  presentation of $\mathsf{i}(T)$ for an $A\mbox{-}$module $T$. We claim that ${\rm Gen}(T)=D_{\mathsf{q}(\sigma)}$ if and only if ${\rm Gen}(\mathsf{i}(T))=D_{\sigma}$. Indeed, on one hand, since there is the commutative diagram with the isomorphic vertical maps
\[
\xymatrix@C=45pt{
{\rm Hom}_{A}(\mathsf{q}(P_{0}),X) \ar[r]^{}\ar[d]^{\cong} & {\rm Hom}_{A}(\mathsf{q}(P_{1}),X)\ar[d]^{\cong} \\
{\rm Hom}_{B}(P_{0},\mathsf{i}(X))\ar[r]^{} & {\rm Hom}_{A}(P_{1},\mathsf{i}(X)) }
\]
it follows that $X\in D_{\mathsf{q}(\sigma)}$ if and only if $\mathsf{i}(X)\in D_{\sigma}$. On the other hand, $X\in {\rm Gen}(T)$ if and only if there exists an epimorphism $T^{\nu}\lxr X$ for some index set $\nu$
if and only if $\mathsf{i}(T)^{\nu}\lxr \mathsf{i}(X)$ is an epimorphism if and only if $\mathsf{i}(X)\in {\rm Gen}(\mathsf{i}(T))$.

\vskip 5pt

(ii). \ Taking a projective presentation $Q_{1}\stackrel{\kappa}{\longrightarrow} Q_{0}\longrightarrow T\longrightarrow 0$  of $T$, we get that $\mathsf{q}(Q_{1})\stackrel{\mathsf{q}(\kappa)}{\longrightarrow}\mathsf{q}(Q_{0})\longrightarrow \mathsf{q}(T) \longrightarrow 0$ is a projective presentation of $\mathsf{q}(T)$. We prove that $D_{\mathsf{q}(\kappa)}={\rm Gen}(\mathsf{q}(T))$.

\vskip 5pt

Since there are isomorphisms
$${\rm Hom}_{A}(\mathsf{q}(Q_{0}),\ \mathsf{q}(T))\cong {\rm Hom}_{B}(Q_{0},\ \mathsf{i}\mathsf{q}(T))$$
and
$${\rm Hom}_{A}(\mathsf{q}(Q_{1}),\ \mathsf{q}(T))\cong {\rm Hom}_{B}(Q_{1},\ \mathsf{i}\mathsf{q}(T))$$
and the exact sequence $T\longrightarrow \mathsf{i}\mathsf{q}(T) \longrightarrow 0$, we have the following commutative diagram
\[
\xymatrix@C=45pt{
{\rm Hom}_{B}(Q_{0},\ T) \ar[r]^{}\ar[d]^{} & {\rm Hom}_{B}(Q_{0},\ \mathsf{i}\mathsf{q}(T))\ar[r]^{}\ar[d]^{} & 0  \\
{\rm Hom}_{B}(Q_{1},\ T)\ar[r]^{} & {\rm Hom}_{B}(Q_{1},\ \mathsf{i}\mathsf{q}(T))\ar[r]^{} & 0, }
\]
which shows that $\mathsf{q}(T)\in D_{\mathsf{q}(\kappa)}$ and thus ${\rm Gen}(\mathsf{q}(T))\subseteq D_{\mathsf{q}(\kappa)}$.

\vskip 5pt

On the other hand, let $X\in D_{\mathsf{q}(\kappa)}$, there is an epimorphism ${\rm Hom}_{A}(\mathsf{q}(Q_{0}), X)\longrightarrow {\rm Hom}_{A}(\mathsf{q}(Q_{1}),X) \longrightarrow 0$, that is,
\[
\xymatrix@C=45pt{
{\rm Hom}_{A}(\mathsf{q}(Q_{0}),X) \ar[r]^{}\ar[d]^{\cong} & {\rm Hom}_{A}(\mathsf{q}(Q_{1}),X)\ar[r]^{}\ar[d]^{\cong} & 0  \\
{\rm Hom}_{B}(Q_{0},\mathsf{i}(X))\ar[r]^{} & {\rm Hom}_{B}(Q_{1},\mathsf{i}(X))\ar[r]^{} & 0. }
\]
Thus we have that $\mathsf{i}(X)\in D_{\kappa}={\rm Gen}(T)$ and so $X \in \rm{Gen}(\mathsf{q}(T))$.
\end{proof}
\end{lem}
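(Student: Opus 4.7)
My plan is to transfer the silting data through the adjunctions of the recollement, relying on three structural features: the middle functor $\mathsf{i}$ is fully faithful and exact, its left adjoint $\mathsf{q}$ is right exact and sends projectives to projectives (as the left adjoint of an exact functor), and the essential image of $\mathsf{i}$ coincides with the kernel of $\mathsf{e}$ (equivalently, $\mathsf{e}\mathsf{i}=0$ and $\mathsf{q}\mathsf{i}\cong \iden$).

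For part (i), I would start from a $B$-projective presentation $\sigma\colon P_{1}\to P_{0}\to \mathsf{i}(T)\to 0$. Applying $\mathsf{q}$, right-exactness together with $\mathsf{q}\mathsf{i}(T)\cong T$ yields an $A$-projective presentation $\mathsf{q}(\sigma)\colon \mathsf{q}(P_{1})\to \mathsf{q}(P_{0})\to T\to 0$. The adjunction $\Hom_{A}(\mathsf{q}(-),X)\cong \Hom_{B}(-,\mathsf{i}(X))$ translates membership of $X$ in $D_{\mathsf{q}(\sigma)}$ into membership of $\mathsf{i}(X)$ in $D_{\sigma}$, while fully-faithfulness of $\mathsf{i}$ together with its preservation of colimits translates $X\in\Gen(T)$ into $\mathsf{i}(X)\in\Gen(\mathsf{i}(T))$. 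The decisive step is to argue that $\Gen(\mathsf{i}(T))$ already lies in the essential image of $\mathsf{i}$: given an epimorphism $\mathsf{i}(T)^{\lambda}\twoheadrightarrow M$, applying the exact functor $\mathsf{e}$ and invoking $\mathsf{e}\mathsf{i}=0$ forces $\mathsf{e}(M)=0$, placing $M$ in $\Ker(\mathsf{e})=\Image(\mathsf{i})$. Combining these ingredients yields the equivalence $\Gen(T)=D_{\mathsf{q}(\sigma)}\Leftrightarrow \Gen(\mathsf{i}(T))=D_{\sigma}$, and choosing projective presentations compatibly passes this through the ``there exists $\sigma$'' quantifier in the definition of silting.

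For part (ii), starting from a projective presentation $\kappa\colon Q_{1}\to Q_{0}\to T\to 0$ witnessing that $T$ is silting in $B\mbox{-}\mathrm{Mod}$, I would apply $\mathsf{q}$ to obtain a projective presentation $\mathsf{q}(\kappa)$ of $\mathsf{q}(T)$ in $A\mbox{-}\mathrm{Mod}$. I would first show $\mathsf{q}(T)\in D_{\mathsf{q}(\kappa)}$ by applying $\Hom_{B}(Q_{j},-)$ to the unit epimorphism $T\twoheadrightarrow \mathsf{i}\mathsf{q}(T)$ and then rewriting the target via the adjunction $\Hom_{B}(Q_{j},\mathsf{i}\mathsf{q}(T))\cong \Hom_{A}(\mathsf{q}(Q_{j}),\mathsf{q}(T))$; combined with closure of $D_{\mathsf{q}(\kappa)}$ under quotients and coproducts, this gives $\Gen(\mathsf{q}(T))\subseteq D_{\mathsf{q}(\kappa)}$. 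Conversely, for $X\in D_{\mathsf{q}(\kappa)}$, the adjunction lands $\mathsf{i}(X)$ in $D_{\kappa}=\Gen(T)$; picking a surjection $T^{\lambda}\twoheadrightarrow \mathsf{i}(X)$ and applying the right-exact $\mathsf{q}$ yields the desired surjection $\mathsf{q}(T)^{\lambda}\twoheadrightarrow X$.

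The step I expect to be the main obstacle is pinning down the ``right-hand'' equalities. In (i) this is the inclusion $\Gen(\mathsf{i}(T))\subseteq \Image(\mathsf{i})$, where the recollement axiom $\mathsf{e}\mathsf{i}=0$ together with the identification of $\Image(\mathsf{i})$ with $\Ker(\mathsf{e})$ is essential. In (ii) it is the need for the unit $T\twoheadrightarrow \mathsf{i}\mathsf{q}(T)$ to be epic, a standard feature of recollements of abelian categories; without it, the inclusion $\Gen(\mathsf{q}(T))\subseteq D_{\mathsf{q}(\kappa)}$ would already fail. Once these two structural points are secured, the remainder of the argument reduces to routine tracking along the adjunctions.
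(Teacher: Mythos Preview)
Your proposal is essentially the paper's proof: for (i), both start from a $B$-projective presentation $\sigma$ of $\mathsf{i}(T)$, apply $\mathsf{q}$ to obtain a presentation of $T$, and use the adjunction $(\mathsf{q},\mathsf{i})$ together with full faithfulness of $\mathsf{i}$ to match $D_{\mathsf{q}(\sigma)}$ with $D_{\sigma}$ and $\Gen(T)$ with $\Gen(\mathsf{i}(T))$; for (ii), both apply $\mathsf{q}$ to a presentation $\kappa$ of $T$ and invoke the unit epimorphism $T\twoheadrightarrow\mathsf{i}\mathsf{q}(T)$ to place $\mathsf{q}(T)$ in $D_{\mathsf{q}(\kappa)}$, then use the adjunction again for the reverse inclusion. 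The structural points you single out as obstacles---$\Gen(\mathsf{i}(T))\subseteq\Image(\mathsf{i})$ via $\mathsf{e}\mathsf{i}=0$, the unit being epic, and $\mathsf{q}$ preserving projectives as the left adjoint of an exact functor---are exactly the ingredients the paper uses, only spelled out more explicitly in your write-up.
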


\vskip 10pt

\begin{thm}\
\label{cosiltingstratifying}
Let $A$ be a  ring and $J$ the idempotent ideal of $A$. Let $T$ be a left $A/J\mbox{-}$module.
Then $T$ is a silting $A$-module  if and only if $T$  is a silting $A/J$-module.
\begin{proof}\ The result follows immediately from Lemma~\ref{silreco}.
\end{proof}
\end{thm}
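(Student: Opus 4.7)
The plan is to obtain the statement directly from Lemma~\ref{silreco}(i), after exhibiting $A/J\mbox{-}{\rm Mod}$ as the leftmost term of a recollement whose middle term is $A\mbox{-}{\rm Mod}$. The intended fully faithful embedding is $\mathsf{i}\colon A/J\mbox{-}{\rm Mod} \hookrightarrow A\mbox{-}{\rm Mod}$, namely restriction of scalars along the canonical projection $A \twoheadrightarrow A/J$. Since $T$ is already an $A/J\mbox{-}$module by hypothesis, $\mathsf{i}(T)$ coincides with $T$ viewed as an $A\mbox{-}$module through this projection, so the desired equivalence can be read off from Lemma~\ref{silreco}(i) verbatim once the recollement is in place.

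First I would assemble the required recollement. The hypothesis that $J$ is an idempotent ideal of $A$ (that is, $J^{2}=J$) is precisely what supplies the necessary adjoints to $\mathsf{i}$: one takes $\mathsf{q}:=A/J\otimes_{A}-$ as the left adjoint and $\mathsf{p}:=\Hom_{A}(A/J,-)$ as the right adjoint, and the idempotency of $J$ is exactly what makes $\mathsf{q}\mathsf{i}\cong\mathrm{id}$ and $\mathsf{p}\mathsf{i}\cong\mathrm{id}$, i.e.\ $\mathsf{i}$ is bireflective. In the archetypal case $J=AeA$ for an idempotent $e\in A$, this is the classical recollement recalled in the Introduction with rightmost term $eAe\mbox{-}{\rm Mod}$; for a general idempotent ideal $J$ the rightmost term is obtained abstractly as the Serre quotient of $A\mbox{-}{\rm Mod}$ by the bireflective subcategory $A/J\mbox{-}{\rm Mod}$, and all six functors become available as soon as $J^{2}=J$ is used.

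Finally I would invoke Lemma~\ref{silreco}(i), with the roles of $A$ and $B$ in the lemma played by $A/J$ and $A$ respectively, and with $\mathsf{i}$ the inclusion above. Since $\mathsf{i}(T)=T$ as an abelian group with its induced $A\mbox{-}$action, the lemma immediately yields: $T$ is a silting $A/J\mbox{-}$module if and only if $T$ is a silting $A\mbox{-}$module, which is the claim.

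The only non-routine ingredient is the six-functor formalism for a general idempotent ideal; I expect this to be the main obstacle, since fewer standard references treat recollements coming from abstract idempotent ideals than the classical $J=AeA$ case. Everything silting-theoretic is outsourced to Lemma~\ref{silreco}(i), which is why the paper records the proof as a single sentence.
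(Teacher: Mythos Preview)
Your proposal is correct and follows exactly the paper's route: instantiate the recollement with $A/J\mbox{-}{\rm Mod}$ on the left and $A\mbox{-}{\rm Mod}$ in the middle, so that $\mathsf{i}(T)$ is $T$ regarded as an $A$-module, and then read off the equivalence from Lemma~\ref{silreco}(i). Your worry about the six-functor formalism for a general idempotent ideal is reasonable but not fatal here: inspecting the proof of Lemma~\ref{silreco}(i) shows it uses only the adjunction $\mathsf{q}\dashv\mathsf{i}$ with $\mathsf{i}$ exact and fully faithful (to transport the projective presentation and compare $D_{\sigma}$ with $D_{\mathsf{q}(\sigma)}$ and $\mathrm{Gen}$ on both sides), and this half of the recollement exists for any ideal with $J^{2}=J$, independently of whether the right-hand term is a module category over a ring.
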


\vskip 10pt

\subsection{\bf Tensor product of silting modules}\ Throughout this subsection, algebras are finite dimensional, and modules are finitely generated.
We show that the tensor products of silting modules are also silting modules.

\vskip 10pt

\begin{thm}
\label{tpofsilting}
Let $A$ and $B$ be two algebras, $\sigma: P_{1}\lxr P_{0}$ the morphism of projective $A\mbox{-}$modules and $\eta: Q_{1}\lxr Q_{0}$ the morphism of projective $B\mbox{-}$modules. Suppose that $T$ is a silting $A\mbox{-}$module with respect to $\sigma$ and $S$ is a silting $B\mbox{-}$module with respect to $\eta$. Then $T\otimes_{k} S$ is a silting $A\otimes_{k} B\mbox{-}$module with respect to $\sigma\otimes \eta$.
\end{thm}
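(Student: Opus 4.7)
\ The plan is to construct the appropriate two-term projective presentation of $T \otimes_k S$ from the given $\sigma$ and $\eta$, and then verify the silting axioms by transferring them across the Hom-tensor adjunction. Since $k$ is a field, $-\otimes_k-$ is exact on both sides, so applying it to the presentations $P_1 \stackrel{\sigma}{\lxr} P_0 \lxr T \lxr 0$ and $Q_1 \stackrel{\eta}{\lxr} Q_0 \lxr S \lxr 0$ shows that $T \otimes_k S$ is the cokernel of the total differential of the double complex $P_\bullet \otimes_k Q_\bullet$, namely
\[
\sigma \otimes \eta \ : \ (P_1 \otimes_k Q_0)\,\oplus\,(P_0 \otimes_k Q_1)\ \lxr\ P_0 \otimes_k Q_0.
\]
Its domain and codomain are projective over $A \otimes_k B$ because the tensor product over $k$ of a projective $A$-module with a projective $B$-module is projective over $A \otimes_k B$. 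Thus one has a genuine two-term projective presentation of $T \otimes_k S$, which I identify with $\sigma \otimes \eta$.

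The second step is to describe $D_{\sigma \otimes \eta}$ in terms of $D_\sigma$ and $D_\eta$. Using the natural isomorphism
\[
\Hom_{A \otimes_k B}(P \otimes_k Q,\ X)\ \cong\ \Hom_A(P,\ \Hom_B(Q, X)),
\]
valid for $P$ projective over $A$ and $Q$ projective over $B$, the surjectivity of $\Hom_{A \otimes_k B}(\sigma \otimes \eta, X)$ decouples into a simultaneous lifting problem: membership $X \in D_{\sigma \otimes \eta}$ amounts to $\Hom_B(Q_0, X)$ being an object of $D_\sigma$ as an $A$-module together with the symmetric condition for $\Hom_A(P_0, X)$ as a $B$-module. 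With this description, I would verify $T \otimes_k S \in D_{\sigma \otimes \eta}$ by computing $\Hom_B(Q_0, T \otimes_k S) \cong T \otimes_k \Hom_B(Q_0, S)$, which lies in $D_\sigma = \Gen T$ because $\Hom_B(Q_0, S)$ is a $k$-vector space and $D_\sigma$ is closed under coproducts (the symmetric argument handles the other factor). The closure of $D_{\sigma \otimes \eta}$ under coproducts and epimorphic images is then inherited from the corresponding closures of $D_\sigma$ and $D_\eta$ via adjunction and the exactness of $-\otimes_k-$.

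What remains is the equality $\Gen(T \otimes_k S) = D_{\sigma \otimes \eta}$. The inclusion $\Gen(T \otimes_k S) \subseteq D_{\sigma \otimes \eta}$ is immediate from membership and the torsion-class closure just noted. The reverse inclusion $D_{\sigma \otimes \eta} \subseteq \Gen(T \otimes_k S)$ is the main obstacle, because for $X \in D_{\sigma \otimes \eta}$ one must produce an $A \otimes_k B$-linear surjection $(T \otimes_k S)^{(I)} \twoheadrightarrow X$, not merely two separate surjections on either factor. I plan to exploit finite-dimensionality by passing to the support $\tau$-tilting description: over a finite-dimensional $k$-algebra, a silting module $T$ determines its torsion class $\Gen T$ by the collection of simple modules it annihilates through $\Hom_A(T,-)$, and the simple $A \otimes_k B$-modules are exactly the tensor products $U \otimes_k V$ of simple $A$- and $B$-modules. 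Matching simple quotients factor by factor then identifies $\Gen(T \otimes_k S)$ with the class cut out by the two adjunction conditions, i.e.\ with $D_{\sigma \otimes \eta}$. The delicate point will be to prove this simple-quotient matching rigorously, which is where the hypothesis that $A$ and $B$ are finite-dimensional $k$-algebras is essential, since it guarantees that $\Hom_B(Q_0, X)$ inherits a finite-dimensional $A$-module structure on which the silting generation of $T$ can be applied.
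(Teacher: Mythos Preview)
Your approach diverges from the paper's almost immediately. The paper reads $\sigma\otimes\eta$ as the single map $P_{1}\otimes_{k}Q_{1}\to P_{0}\otimes_{k}Q_{0}$ and asserts that this already presents $T\otimes_{k}S$; it then invokes the K\"unneth isomorphism $\Hom_{A\otimes_{k}B}(P_{i}\otimes_{k}Q_{i},\,T\otimes_{k}S)\cong\Hom_{A}(P_{i},T)\otimes_{k}\Hom_{B}(Q_{i},S)$, writes ``$D_{\sigma\otimes\eta}=D_{\sigma}\otimes D_{\eta}$'', and checks the inclusion $D_{\sigma\otimes\eta}\subseteq{\rm Gen}(T\otimes_{k}S)$ only on modules of the shape $M\otimes_{k}N$. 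Your total-complex presentation $(P_{1}\otimes Q_{0})\oplus(P_{0}\otimes Q_{1})\to P_{0}\otimes Q_{0}$ and your use of the adjunction $\Hom_{A\otimes_{k}B}(P\otimes Q,X)\cong\Hom_{A}(P,\Hom_{B}(Q,X))$ are both more honest: the cokernel of the paper's map is $(P_{0}\otimes_{k}Q_{0})/(\Image\sigma\otimes_{k}\Image\eta)$, which is not $T\otimes_{k}S$ in general, and the ``equality'' $D_{\sigma\otimes\eta}=D_{\sigma}\otimes D_{\eta}$ has no literal meaning since an arbitrary $A\otimes_{k}B$-module is not a tensor product. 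So the difficulty you isolate as ``the main obstacle'' is real, and the paper's argument does not actually address it either.

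That said, your own plan for that obstacle does not work as stated. Two of your assumptions fail. First, the simple $A\otimes_{k}B$-modules are of the form $U\otimes_{k}V$ only when $k$ is algebraically closed (or under a similar splitting hypothesis), which the statement does not assume; over $k=\mathbb{R}$ with $A=B=\mathbb{C}$ one already sees extra simples. Second, and more damaging, a functorially finite torsion class is \emph{not} determined by the set of simples annihilated by $\Hom(T,-)$: for the path algebra of $A_{2}$ the torsion-free classes $\add(S_{2})$ and $\add(S_{2},P_{1})$ contain the same unique simple, yet correspond to different silting modules. Hence ``matching simple quotients factor by factor'' cannot identify ${\rm Gen}(T\otimes_{k}S)$ with $D_{\sigma\otimes\eta}$. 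To close the gap you need a sharper invariant that genuinely transfers along $-\otimes_{k}-$, for instance the full support $\tau$-tilting pair $(T,P)$ or the homotopy class of the two-term complex, and a proof that the tensor product of two such pairs is again one; alternatively, construct the surjection $(T\otimes_{k}S)^{(I)}\twoheadrightarrow X$ directly from your adjunction description of $D_{\sigma\otimes\eta}$, which is the step neither your outline nor the paper's proof supplies.
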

\begin{proof}\ From the exact sequences
$$P_{1}\stackrel{\sigma}{\lxr} P_{0}\lxr T\lxr 0 \ \ \ and\ \ \ Q_{1}\stackrel{\eta}{\lxr} Q_{0}\lxr S\lxr 0,$$
we get the induced exact sequence
$$P_{1}\otimes_{k} Q_{1}\xrightarrow{\sigma\otimes \eta} P_{0}\otimes_{k} Q_{0}\lxr T\otimes_{k}S\lxr 0.$$
Since there are the following isomorphisms
$${\rm Hom}_{A\otimes_{k}B}(P_{i}\otimes_{k} Q_{i}, T\otimes_{k}S)\cong {\rm Hom}_{A}(P_{i}, T)\otimes_{k}{\rm Hom}_{B}(Q_{i}, S),$$
for $i=0, 1$, it follows that $T\otimes_{k}S\in D_{\sigma\otimes \eta}$ if and only if $T\in D_{\sigma}$ and $S\in D_{\eta}$. This is to say, $D_{\sigma\otimes \eta}=D_{\sigma}\otimes D_{\eta}$.

\vskip 5pt

Since $D_{\sigma}$ and $D_{\eta}$ are closed under direct sums, it is obvious that $D_{\sigma\otimes \eta}$ has the same property, i.e., $D_{\sigma\otimes \eta}$ is a torsion class. Furthermore, for every $M\otimes_{k}N\in D_{\sigma\otimes \eta}$, we have that
$$M\in D_{\sigma}={\rm Gen}T \ \ \ \ and \ \ \ \ N\in D_{\eta}={\rm Gen}S.$$
Therefore, $M\otimes_{k}N\in {\rm Gen}T\otimes_{k}{\rm Gen}S\subseteq{\rm Gen}(T\otimes_{k}S)$. This completes the proof.
\end{proof}

\vskip 20pt

\section{\bf Transfer property of Gorenstein silting modules}

\vskip 10pt

In this section, we focus on the transferring properties of Gorenstein silting modules, where the main tool is the recollement of module categories.
 As a consequence, we characterise the relations of Gorenstein silting modules among the rings $A, B$ and $\Gamma$, where $\Gamma$ is the upper triangular matrix rings building from $A$ and $B$.

\vskip 10pt

Now let $A,\ B$ and $C$ be three Noetherian rings such that $A\mbox{-}{\rm GP}$ and $C\mbox{-}{\rm GP}$ are contravariantly finite in $A\mbox{-}{\rm Mod}$ and $C\mbox{-}{\rm Mod}$, respectively. Suppose that there exists the recollement of module categories
\[
\xymatrix@C=0.5cm{
A\mbox{-}{\rm Mod}\ar[rrr]^{\mi}  \ar @/_3.0pc/[rrr]^{\map_{1}}  &&& B\mbox{-}{\rm Mod}\ar @/^3.0pc/[rrr]^{\ml^{1}} \ar[rrr]^{\me}  \ar @/_1.5pc/[lll]_{\mq}  \ar @/^1.5pc/[lll]_{\map}   &&& C\mbox{-}{\rm Mod}
\ar @/_1.5pc/[lll]_{\ml} \ar @/^1.5pc/[lll]^{\mr}
 }
\]
such that $\map$ has an exact right adjoint $\map_{1}$ and $\ml$ has a right adjoint $\ml^{1}$ such that $\ml^{1}\mi$ preserves Gorenstein-projective modules. Let $X\in A\mbox{-}{\rm Mod}$ and $Y\in C\mbox{-}{\rm Mod}$, and
$$E_{1}\stackrel{\theta_{X}}{\lxr} E_{0}\lxr X\lxr 0\eqno(1)$$
 and
$$G_{1}\stackrel{\theta_{Y}}{\lxr} G_{0}\lxr Y\lxr 0\eqno(2)$$
the proper Gorenstein-projective presentations of $X$ and $Y$ respectively. Applying the functors $\mi(-)$ and $\ml(-)$ to (1) and (2) respectively, we get the following exact sequences:
$$\mi(E_{1})\xrightarrow{\mi(\theta_{X})} \mi(E_{0})\lxr \mi(X)\lxr 0$$
and
$$\ml(G_{1})\xrightarrow{\ml(\theta_{Y})} \ml(G_{0})\lxr \ml(Y)\lxr 0.$$
Then we get the following exact sequence:
$$\mi(E_{1})\oplus \ml(G_{1})\stackrel{\theta}{\lxr} \mi(E_{0})\oplus \ml(G_{0})\lxr \mi(X)\oplus \ml(Y)\lxr 0\eqno(3)$$
with $\theta=\begin{pmatrix}\begin{smallmatrix}
\mi(\theta_{X}) & 0 \\
0 & \ml(\theta_{Y}) \\
\end{smallmatrix}\end{pmatrix}$.

\vskip 10pt

\begin{lem}
\label{GP}
 The exact sequence {\rm (3)} mentioned above is a proper Gorenstein-projective presentation of $\mi(X)\oplus \ml(Y)$.
\end{lem}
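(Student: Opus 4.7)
The goal is to verify the two defining axioms of a proper Gorenstein-projective presentation for the sequence (3) in $B\mbox{-}{\rm Mod}$: that each term $\mi(E_i)\oplus\ml(G_i)$ is Gorenstein-projective, and that applying $\Hom_B(\mathcal{G},-)$ for every Gorenstein-projective $B$-module $\mathcal{G}$ preserves exactness. Both conditions are stable under finite direct sums of sequences, so my plan is to establish them separately for the two summand sequences $\mi(E_1)\xrightarrow{\mi(\theta_X)}\mi(E_0)\to\mi(X)\to 0$ and $\ml(G_1)\xrightarrow{\ml(\theta_Y)}\ml(G_0)\to\ml(Y)\to 0$, then assemble via direct sum.

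For the Gorenstein-projectivity of the terms, I would apply the exact functor $\mi$ to a complete projective resolution $P^\bullet$ of $E_i$ in $A\mbox{-}{\rm Mod}$, obtaining an exact complex $\mi(P^\bullet)$ with $\mi(E_i)\cong\Ker\mi(d^0)$. Since $\map$ is simultaneously a right adjoint of $\mi$ (hence left exact) and a left adjoint of the exact $\map_1$ (hence right exact), $\map$ is exact, and by the standard adjoint argument $\mi$ then preserves projectives. The isomorphism $\Hom_B(\mi(P^\bullet),Q)\cong\Hom_A(P^\bullet,\map(Q))$, combined with the projectivity of $\map(Q)$ in $A\mbox{-}{\rm Mod}$, shows that $\mi(P^\bullet)$ is a complete projective resolution, so $\mi(E_i)$ is Gorenstein-projective. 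The parallel argument for $\ml(G_i)$ invokes the adjunction $\ml\dashv\ml^1$ together with the hypothesis that $\ml^1\mi$ preserves Gorenstein-projectives, which supplies the Hom-against-projectives exactness needed to transport a complete projective resolution of $G_i$ in $C\mbox{-}{\rm Mod}$ to one of $\ml(G_i)$ in $B\mbox{-}{\rm Mod}$.

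For the $\Hom_B(\mathcal{G},-)$-exactness, I would split into the two summands. On the $\mi$-summand the adjunction $\mq\dashv\mi$ gives $\Hom_B(\mathcal{G},\mi(-))\cong\Hom_A(\mq(\mathcal{G}),-)$; to conclude exactness from the proper Gorenstein-projective presentation property of (1) it suffices that $\mq(\mathcal{G})$ is Gorenstein-projective in $A\mbox{-}{\rm Mod}$, which one extracts from the exactness of $\mi$ and its preservation of projectives applied to a complete projective resolution of $\mathcal{G}$. The symmetric calculation on the $\ml$-summand uses $\ml\dashv\ml^1$ and the hypothesis on $\ml^1\mi$ to reduce to the proper Gorenstein-projectivity of (2). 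The main obstacle is precisely this web of preservation-of-Gorenstein-projectivity statements for $\mi$, $\ml$, $\mq$ and the $C$-side analogue of $\mq$: once these are extracted from the exactness of $\map_1$, the recollement axioms, and the hypothesis on $\ml^1\mi$, the direct-sum assembly immediately yields (3) as the desired proper Gorenstein-projective presentation of $\mi(X)\oplus\ml(Y)$.
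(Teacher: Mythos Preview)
Your reduction of the $\Hom_B(\mathcal{G},-)$-exactness to the sequences (1) and (2) via the adjunctions $\mq\dashv\mi$ and $\ml^{1}\dashv\ml$ is the right idea, but the step where you assert that $\mq(\mathcal{G})$ is Gorenstein-projective in $A\mbox{-}{\rm Mod}$ (and, symmetrically, that $\ml^{1}(\mathcal{G})$ is Gorenstein-projective in $C\mbox{-}{\rm Mod}$) is not justified. You write that this ``extracts from the exactness of $\mi$ and its preservation of projectives applied to a complete projective resolution of $\mathcal{G}$'', but $\mi$ goes from $A\mbox{-}{\rm Mod}$ to $B\mbox{-}{\rm Mod}$, not the other way; the functor you must control is $\mq$. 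Now $\mq$ is only a left adjoint in the given data, hence only right exact, so applying it to a totally acyclic complex of projectives over $B$ need not produce an acyclic complex over $A$. The same obstruction applies to $\ml^{1}$. Thus your ``web of preservation-of-Gorenstein-projectivity statements'' for $\mq$ and $\ml^{1}$ does not follow from exactness of $\map_{1}$ and the recollement axioms alone.

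The paper closes this gap by inserting a structural step you omit: it first shows that every projective $B$-module is of the form $\mi(P)\oplus\ml(Q)$, and from this that every Gorenstein-projective $B$-module decomposes as $\mi(E)\oplus\ml(G)$ with $E\in A\mbox{-}{\rm GP}$ and $G\in C\mbox{-}{\rm GP}$. Once this classification is in hand, one no longer needs to push an arbitrary $\mathcal{G}$ through $\mq$ or $\ml^{1}$; instead one tests $\Hom_B(\mi(E)\oplus\ml(G),-)$ directly, and the adjunctions $\mi\dashv\map$, $\ml\dashv\me$, $\ml^{1}\dashv\ml$ together with $\me\mi=0$ and the hypothesis on $\ml^{1}\mi$ reduce everything to the known $G$-exactness of (1) and (2). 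This classification is also what makes the Gorenstein-projectivity of $\mi(E_i)\oplus\ml(G_i)$ immediate, whereas your parallel argument for $\ml(G_i)$ tacitly needs $\me$ to preserve projectives, a fact that again rests on the description of $B$-projectives rather than on the hypothesis about $\ml^{1}\mi$ that you cite.
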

\begin{proof}\ By assuming that $\map$ is an exact functor, it follows that $\mi$ preserves projective modules and $\mr$ admits a right adjoint $\mr_{1}$. Let $T$ be a projective $B\mbox{-}$module, then there is an $A\mbox{-}$module $Z$ such that there exists the following exact sequence:
\[
\xymatrix@C=25pt{
0\ar[r]  &   \mi(Z)\ar[r]   &   \ml\me(T)\ar[rr]\ar@{->>}[rd]  & &  T\ar[r]  &  \mi\mq(T)\ar[r]   &   0.  \\
& & &  K\ar@{^{(}->}[ru]  & & &}
\]
By $\mi\mq(T)$ being projective, we have that $T\cong K\oplus \mi\mq(T)$ and $\ml\me(T)\cong K\oplus \mi(Z)$ with $\me(K)\cong \me(T)$. This means that the projective $B\mbox{-}$modules are of the form either $\mi(P)$ or $\ml(Q)$, where $P$ runs over projective $A\mbox{-}$modules and $Q$ runs over projective $C\mbox{-}$modules.

\vskip 5pt

With $P^{\prime}$ being $A$-projective and $Q^{\prime}$ being $C$-projective, since there are the isomorphisms
$${\rm Hom}_{B}(\mi(P^{\prime}),\ \mi(P)\oplus \ml(Q))\cong {\rm Hom}_{A}(P^{\prime}, P)\oplus {\rm Hom}_{A}(P^{\prime}, \map\ml(Q))$$
and
$${\rm Hom}_{B}(\ml(Q^{\prime}),\ \mi(P)\oplus \ml(Q))\cong {\rm Hom}_{C}(Q^{\prime}, Q),$$
we get that each indecomposable Gorenstein-projective $B\mbox{-}$module  is of the form $\mi(E)$ or $\ml(G)$ whenever $E$ runs over all the indecomposable Gorenstein-projective $A\mbox{-}$modules and $G$ runs over all the indecomposable Gorenstein-projective $C\mbox{-}$modules. Therefore, $\mi(E_{1})\oplus \ml(G_{1})$ and $\mi(E_{0})\oplus \ml(G_{0})$, mentioned in the exact sequence (3), are Gorenstein-projective $B\mbox{-}$modules.

\vskip 5pt

Now we prove the exact sequence (3) is ${\rm Hom}_{B}(S, -)\mbox{-}$exact for any Gorenstein-projective $B\mbox{-}$module $S=\mi(E)\oplus \ml(G)$. Since there are the following commutative diagrams, where we denote ${\rm Hom}(-,-)$ by $(-,-)$ simply:
\[\xymatrix@C=10pt{
(\mi(E),\ \mi(E_{1})\oplus \ml(G_{1})) \ar[d]_{\cong}\ar[r]^{} &   (\mi(E),\ \mi(E_{0})\oplus \ml(G_{0})) \ar[d]_{\cong}\ar[r]^{}  &  (\mi(E),\ \mi(X)\oplus \ml(Y)) \ar[d]_{\cong}\\
(E, E_{1})\oplus (\ml^{1} \mi(E), G_{1}) \ar[r]^{} &   (E, E_{0})\oplus (\ml^{1} \mi(E), G_{0})\ar[r]^{}   &  (E, X)\oplus (\ml^{1} \mi(E), Y) }
\]
\noindent and
\[\xymatrix@C=10pt{
(\ml(G),\ \mi(E_{1})\oplus \ml(G_{1})) \ar[d]_{\cong}\ar[r]^{} &   (\ml(G),\ \mi(E_{0})\oplus \ml(G_{0})) \ar[d]_{\cong}\ar[r]^{}  &   (\ml(G),\ \mi(X)\oplus \ml(Y)) \ar[d]_{\cong}\\
(G, G_{1}) \ar[r]^{} &   (G, G_{0})\ar[r]^{}   &  (G, Y)}
\]
it is obvious that the sequence (3) is  a $G\mbox{-}$exact sequence.
\end{proof}

\vskip 10pt

\begin{lem}\
\label{torsion class}
Let $X\in A\mbox{-}{\rm Mod},\ Y\in C\mbox{-}{\rm Mod}$, and $Z\in B\mbox{-}{\rm Mod}$. The following statements hold.
\vskip 5pt
\begin{enumerate}
\item \ $Z\in D_{\theta}$ if and only if $\map(Z)\in D_{\theta_{X}}$ and $\me(Z)\in D_{\theta_{Y}}$.

\vskip 5pt

\item \ If $X\in D_{\theta_{X}}$, then $\mi(X)\in D_{\theta}$.

\vskip 5pt

\item \ If $Y\in D_{\theta_{Y}}$ and $\map\ml(Y)\in D_{\theta_{X}}$, then $\ml(Y)\in D_{\theta}$.

\vskip 5pt

\item \ $\mi(X)\oplus \ml(Y)\in D_{\theta}$ if and only if $X\in D_{\theta_{X}},\ Y\in D_{\theta_{Y}}$ and $\map\ml(Y)\in D_{\theta_{X}}$.

\vskip 5pt

\item \ $D_{\theta}$ is a relative torsion class if and only if both $D_{\theta_{X}}$ and $D_{\theta_{Y}}$ are relative torsion classes.
\end{enumerate}
\end{lem}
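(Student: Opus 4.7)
The plan is to derive (i) first, and then deduce the remaining items by specialization together with the coproduct-preservation properties of $\map$ and $\me$. The key structural observation is that $\theta = \mi(\theta_X)\oplus \ml(\theta_Y)$ is block diagonal, so $\Hom_B(\theta, Z)$ decomposes as $\Hom_B(\mi(\theta_X), Z)\oplus\Hom_B(\ml(\theta_Y), Z)$, which is an epimorphism if and only if each summand is. Invoking the recollement adjunctions $\mq\dashv\mi\dashv\map$ and $\ml\dashv\me\dashv\mr$, these two summands transform into $\Hom_A(\theta_X,\map(Z))$ and $\Hom_C(\theta_Y,\me(Z))$, respectively, which gives (i) at once.

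Items (ii)--(iv) then amount to applying (i) to specific choices of $Z$ and simplifying by the standard recollement identities $\map\mi\cong \id_A$, $\me\mi = 0$, and $\me\ml\cong\id_C$. Setting $Z = \mi(X)$ yields (ii), noting that $0\in D_{\theta_Y}$ vacuously. Setting $Z = \ml(Y)$ yields (iii). Setting $Z = \mi(X)\oplus\ml(Y)$ yields (iv), where the final reformulation uses that $D_{\theta_X}$ is closed under direct sums and direct summands (because $\Hom_A(\theta_X,-)$ commutes with both operations).

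For (v), recall from the observation immediately following the definition of Gorenstein silting that a class of the form $D_\tau$ is a relative torsion class precisely when it is closed under coproducts. The backward direction is then immediate from (i): given $\{Z_j\}\subseteq D_\theta$, the coproduct-preservation of $\map$ and $\me$ (each being a left adjoint, to $\map_1$ and $\mr$ respectively) combined with coproduct-closure of $D_{\theta_X}$ and $D_{\theta_Y}$ yields $\map(\bigoplus Z_j) = \bigoplus\map(Z_j)\in D_{\theta_X}$ and likewise for $\me$, so $\bigoplus Z_j\in D_\theta$ by (i).

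The main subtlety lies in the forward direction of (v), where one must lift families from $D_{\theta_X}$ and $D_{\theta_Y}$ up into $D_\theta$, take coproducts there, and project back down. For the $A$-side, (ii) supplies the lift $X_i\mapsto \mi(X_i)$, and applying $\map$ to $\bigoplus\mi(X_i)\in D_\theta$ gives $\bigoplus X_i\in D_{\theta_X}$ by (i) and coproduct-preservation of $\map$. For the $C$-side, the naive lift via $\ml$ is obstructed by the auxiliary hypothesis $\map\ml(Y_i)\in D_{\theta_X}$ in (iii), which need not hold; the remedy is to lift via $\mr$ instead, since $\map\mr = 0$ and $\me\mr\cong\id_C$ together with (i) force $\mr(Y_i)\in D_\theta$ unconditionally whenever $Y_i\in D_{\theta_Y}$, after which applying $\me$ (which preserves coproducts, having $\mr$ as right adjoint) to $\bigoplus\mr(Y_i)\in D_\theta$ completes the argument.
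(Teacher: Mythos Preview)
Your proof is correct and follows the paper's approach: establish (i) via the block-diagonal decomposition of $\theta$ and the adjunctions $\mi\dashv\map$, $\ml\dashv\me$, then deduce (ii)--(v) by specializing $Z$. The paper's own argument for (ii)--(v) is the single phrase ``follow directly from (i)'', so your explicit treatment---particularly the forward direction of (v), where you correctly note the obstruction to lifting along $\ml$ and instead lift along $\mr$ using $\map\mr=0$ and $\me\mr\cong\id_C$---fills in detail the paper leaves to the reader.
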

\begin{proof}\ Let $Z\in D_{\theta}$. By the definition of $D_{\theta}$, there is the following commutative diagram:
\[\xymatrix@C=15pt{
{\rm Hom}_{B}(\mi(E_{0})\oplus \ml(G_{0}),\ Z) \ar[d]_{\cong}\ar@{->>}[r]^{} &   {\rm Hom}_{B}(\mi(E_{1})\oplus \ml(G_{1}),\ Z) \ar[d]_{\cong}\\
{\rm Hom}_{A}(E_{0}, \map(Z))\oplus {\rm Hom}_{C}(G_{0}, \me(Z)) \ar@{->>}[r]^{} &   {\rm Hom}_{A}(E_{1}, \map(Z))\oplus {\rm Hom}_{C}(G_{1}, \me(Z)),  }
\]
this induces the exact sequences:
$${\rm Hom}_{A}(E_{0},\ \map(Z)))\lxr {\rm Hom}_{A}(E_{1},\ \map(Z))\lxr 0$$
and
$${\rm Hom}_{C}(G_{0},\ \me(Z))\lxr {\rm Hom}_{C}(G_{1},\ \me(Z))\lxr 0,$$
that is, $\map(Z)\in D_{\theta_{X}}$ and $\me(Z)\in D_{\theta_{Y}}$. Conversely, if $\map(Z)\in D_{\theta_{X}}$ and $\me(Z)\in D_{\theta_{Y}}$, we can get from using the above commutative diagram once again that $Z\in D_{\theta}$.

\vskip 5pt

(ii)-(v) follow directly from (i).
\end{proof}

\vskip 10pt

\begin{prop}
\label{partialGS}
$\mi(X)\oplus \ml(Y)$ is a  partial Gorenstein silting $B\mbox{-}$module if and only if $X$ and $\map\ml(Y)$ are partial  Gorenstein silting $A\mbox{-}$modules and $Y$ is a partial  Gorenstein silting $B\mbox{-}$module.
\end{prop}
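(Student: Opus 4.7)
The strategy is to derive this proposition as an almost immediate consequence of Lemmas~\ref{GP} and~\ref{torsion class}. The key is to use the block-diagonal morphism $\theta = \mi(\theta_X) \oplus \ml(\theta_Y)$ of sequence (3) as the candidate witness for the middle module. Lemma~\ref{GP} already identifies $\theta$ as a proper Gorenstein-projective presentation of $\mi(X)\oplus\ml(Y)$, so the two defining axioms of partial Gorenstein silting can be tested against $D_\theta$ directly. Lemma~\ref{torsion class}(v) then translates axiom (Gs1) for $\theta$ into (Gs1) simultaneously for $\theta_X$ and $\theta_Y$, while Lemma~\ref{torsion class}(iv) unpacks axiom (Gs2) for $\mi(X)\oplus\ml(Y)$ into the three membership conditions $X\in D_{\theta_X}$, $Y\in D_{\theta_Y}$ and $\map\ml(Y)\in D_{\theta_X}$.

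For the forward direction, I would assume that $\mi(X)\oplus\ml(Y)$ is partial Gorenstein silting with respect to $\theta$. Applying Lemma~\ref{torsion class}(v) to (Gs1) recovers (Gs1) for both $\theta_X$ and $\theta_Y$; applying Lemma~\ref{torsion class}(iv) to (Gs2) gives $X\in D_{\theta_X}$, $Y\in D_{\theta_Y}$, and $\map\ml(Y)\in D_{\theta_X}$. Reading this off against the definitions, $X$ is partial Gorenstein silting with witness $\theta_X$, $\map\ml(Y)$ is partial Gorenstein silting with witness $\theta_X$, and $Y$ is partial Gorenstein silting with witness $\theta_Y$. The converse direction runs exactly the same translations in reverse: the three partial Gorenstein silting hypotheses feed into Lemma~\ref{torsion class}(v) to yield (Gs1) for $\theta$ and into Lemma~\ref{torsion class}(iv) to yield (Gs2) for $\mi(X)\oplus\ml(Y)$, while Lemma~\ref{GP} certifies $\theta$ as the required proper Gorenstein-projective presentation on the $B$-side.

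The only mildly delicate point I anticipate is the interpretation of ``$\map\ml(Y)$ is a partial Gorenstein silting $A$-module'': its witnessing presentation is $\theta_X$ rather than an intrinsic presentation of $\map\ml(Y)$, so the real content of that clause is precisely the coupling condition $\map\ml(Y)\in D_{\theta_X}$ forced by the recollement. Once this convention is explicitly acknowledged, no further technical input is required and the proof reduces to routine bookkeeping of the two preceding lemmas.
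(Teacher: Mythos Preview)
Your proposal is correct and matches the paper's own proof, which consists of the single sentence ``The result is immediately from Lemma~\ref{GP} and Lemma~\ref{torsion class}.'' Your unpacking via parts (iv) and (v) of Lemma~\ref{torsion class} is exactly the intended reading, and your caveat about the clause ``$\map\ml(Y)$ is partial Gorenstein silting'' is well taken: the content is indeed just the membership $\map\ml(Y)\in D_{\theta_X}$ (together with $D_{\theta_X}$ being a relative torsion class, already guaranteed by $X$), as is confirmed by the way Corollary~\ref{partial} restates the result in the triangular-matrix case.
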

\begin{proof}\ The result is immediately from Lemma~\ref{GP} and Lemma~\ref{torsion class}.
\end{proof}

\vskip 10pt

Now we apply it to the triangular matrix ring. In the following, let $\Gamma=\begin{pmatrix}\begin{smallmatrix}
A & N \\
0 & B \\
\end{smallmatrix}\end{pmatrix}$ be a Gorenstein ring such that ${\rm gl.dim}A<\infty$ and $_{A}N$ and $N_{B}$ are projective. Then there is the following recollement:
\[
\xymatrix@C=0.5cm{
A\mbox{-}{\rm Mod}\ar[rrr]^{Z_{A}}  \ar @/_3.0pc/[rrr]^{H_{A}}  &&& \Gamma\mbox{-}{\rm Mod}\ar @/^3.0pc/[rrr]^{} \ar[rrr]^{U_{B}}  \ar @/_1.5pc/[lll]_{}  \ar @/^1.5pc/[lll]_{U_{A}}   &&& B\mbox{-}{\rm Mod}
\ar @/_1.5pc/[lll]_{T_{B}} \ar @/^1.5pc/[lll]^{}
 }
\]
where $Z_{A}(X)=(X, 0, 0), \ U_{A}(X, Y, f)=X, \ T_{B}(Y)=(N\otimes_{B}Y, Y, 1)$ and $H_{A}(X)=(X, {\rm Hom}_{A}(N, X), \varepsilon_{X})$ with $\varepsilon$ being the counit of $(N\otimes_{B}-, {\rm Hom}_{A}(N, -))$.

\vskip 5pt

\vskip 10pt

Let $X\in A\mbox{-}{\rm Mod}$ and $Y\in B\mbox{-}{\rm Mod}$, and let
$$P_{1}\xrightarrow{\theta_{X}} P_{0}\longrightarrow X\longrightarrow 0 $$
and
$$E_{1}\xrightarrow{\theta_{Y}} E_{0}\longrightarrow Y\longrightarrow 0 $$
be the projective presentation of $X$ and the proper Gorenstein-projective presentation of $Y$ respectively.
Then by Lemma~\ref{GP} the  sequence
$$T^{\bullet}:(P_{1}, 0)\oplus (N\otimes_{B}E_{1}, E_{1}) \stackrel{\theta}{\rightarrow} (P_{0}, 0)\oplus (N\otimes_{B}E_{0}, E_{0})\rightarrow (X, 0)\oplus (N\otimes_{B}Y, Y) \rightarrow 0 $$
is the proper Gorenstein-projective presentation of $(X, 0)\oplus (N\otimes_{B}Y, Y)$.

\vskip 10pt

As an immediate consequence of Proposition~\ref{partialGS}, we have the following.

\vskip 5pt

\begin{cor}\
\label{partial}
$(X, 0)\oplus (N\otimes_{B}Y, Y)$ is a partial Gorenstein silting module with respect to $\theta$ if and only if $X$ is a partial Gorenstein silting $A\mbox{-}$module with respect to $\theta_{X}$, and $Y$ is a partial Gorenstein silting $B\mbox{-}$module with respect to $\theta_{Y}$ such that $N\otimes_{B}Y\in {\rm Gen}_{G}X$.
\end{cor}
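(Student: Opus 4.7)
The plan is to view this corollary as the triangular matrix ring specialisation of Proposition~\ref{partialGS}, reading off the functors from the recollement above and then matching the abstract condition ``$\map\ml(Y)$ is partial Gorenstein silting'' to the concrete condition $N\otimes_{B}Y\in{\rm Gen}_{G}X$. In the recollement displayed here, $\mi = Z_{A}$, $\ml = T_{B}$ and $\map = U_{A}$, so $\mi(X)\oplus\ml(Y)$ is exactly $(X,0)\oplus(N\otimes_{B}Y,Y)$ and $\map\ml(Y) = U_{A}T_{B}(Y) = N\otimes_{B}Y$. The proper Gorenstein-projective presentation $T^{\bullet}$ written just before the corollary is the presentation $\theta$ produced by Lemma~\ref{GP}, with its $A$-component given by $\theta_{X}$ (a projective presentation of $X$, automatically proper Gorenstein-projective because ${\rm gl.dim}\,A<\infty$ forces $A\mbox{-}{\rm GP} = A\mbox{-}{\rm P}$) and its $B$-component given by $\theta_{Y}$. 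The adjoint hypotheses required by Proposition~\ref{partialGS} hold as standard consequences of the projectivity of ${}_{A}N$ and $N_{B}$ together with the Gorenstein assumption on $\Gamma$.

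Then I would apply Proposition~\ref{partialGS} verbatim: $(X,0)\oplus(N\otimes_{B}Y,Y)$ is partial Gorenstein silting with respect to $\theta$ if and only if $X$ and $N\otimes_{B}Y$ are partial Gorenstein silting $A$-modules and $Y$ is a partial Gorenstein silting $B$-module. Unpacking the condition on $N\otimes_{B}Y$ via Lemma~\ref{torsion class}(iv)--(v), it reduces precisely to the membership $N\otimes_{B}Y \in D_{\theta_{X}}$, once $X\in D_{\theta_{X}}$ and $D_{\theta_{X}}$ is known to be a relative torsion class. It therefore only remains to show that, under the standing hypothesis that $X$ is partial Gorenstein silting with respect to $\theta_{X}$, this membership is equivalent to $N\otimes_{B}Y\in{\rm Gen}_{G}X$.

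For one direction, since $X\in D_{\theta_{X}}$ and $D_{\theta_{X}}$ is closed under coproducts, summands and $G$-epimorphic images, we automatically obtain ${\rm Add}\,X\subseteq D_{\theta_{X}}$ and hence ${\rm Gen}_{G}X\subseteq D_{\theta_{X}}$. For the reverse direction I would exploit the specific presentation $N\otimes_{B}\theta_{Y}$ of $N\otimes_{B}Y$, which remains a $G$-exact sequence of Gorenstein-projective $A$-modules because $N_{B}$ is projective and $_{A}N$ is projective, in order to exhibit $N\otimes_{B}Y$ as a $G$-epimorphic image of modules already in ${\rm Add}\,X$.

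The main obstacle is precisely this last translation: under a merely partial (as opposed to full) silting hypothesis the equality $D_{\theta_{X}} = {\rm Gen}_{G}X$ is not available, so the ``only if'' direction cannot be read off formally from $D_{\theta_{X}}$ being a torsion class but must exploit the concrete form of the transferred presentation and the projectivity of $N$. Once that point is isolated, everything else is functorial bookkeeping on top of Proposition~\ref{partialGS}, Lemma~\ref{GP} and Lemma~\ref{torsion class}.
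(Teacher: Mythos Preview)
Your route is the paper's: the corollary is announced as an immediate consequence of Proposition~\ref{partialGS}, with no further argument, and you have correctly instantiated the recollement so that $\mi(X)\oplus\ml(Y)=(X,0)\oplus(N\otimes_{B}Y,Y)$ and $\map\ml(Y)=N\otimes_{B}Y$.

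You are also right that the last translation step is the only non-formal point, and your suspicion there is well founded. What Lemma~\ref{torsion class}(iv)--(v), and hence Proposition~\ref{partialGS}, actually delivers is the condition $N\otimes_{B}Y\in D_{\theta_{X}}$; the corollary, however, asks for $N\otimes_{B}Y\in {\rm Gen}_{G}X$. Under a merely \emph{partial} silting hypothesis the inclusion ${\rm Gen}_{G}X\subseteq D_{\theta_{X}}$ can be strict, so the two conditions are not interchangeable. Your proposed bridge for the reverse implication does not succeed: the surjection $N\otimes_{B}E_{0}\to N\otimes_{B}Y$ does have a projective $A$-module on the left (your reasoning for that, via $N_{B}$ flat, ${}_{A}N$ projective and ${\rm gl.dim}\,A<\infty$, is correct), but a projective $A$-module need not lie in ${\rm Add}\,X$ when $X$ is only partial silting, so this does not place $N\otimes_{B}Y$ in ${\rm Gen}_{G}X$.

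The paper's one-line justification does not address this gap either; and where the corollary is actually used, in the proof of Theorem~\ref{Gsilting}, only the weaker conclusion $N\otimes_{B}Y\in D_{\theta_{X}}$ is invoked. So your decomposition of the argument is faithful to the paper, and the obstacle you isolate is a genuine imprecision in the stated condition rather than a defect in your strategy.
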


\vskip 10pt

It can be seen from the following theorem that the Gorenstein silting modules cannot be directly converted over $\Gamma$ and $A,\ B$, and additional conditions are required.

\vskip 10pt

\begin{lem}
\label{partialtosilting}
{\rm (\cite[Proposition 2.3]{GMZ})}\ Let $T$ be an $A$-module with proper Gorenstein-projective presentation $\theta: G_{1}\lxr G_{0}$. If $T$ is a partial Gorenstein silting module with respect to $\theta$, and for each $P\in A\mbox{-}{\rm GP}$, there exists a $G\mbox{-}$exact sequence $P\stackrel{\phi}{\lxr} T_{0}\lxr T_{-1}\lxr 0$ with $T_{0}$ and $T_{-1}$ in ${\rm Add}T$ such that $\phi$ is the left $D_{\theta}\mbox{-}$approximation, then $T$ is a Gorenstein silting module.
\end{lem}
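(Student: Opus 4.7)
The plan is to upgrade the partial Gorenstein silting condition $T\in D_{\theta}$ to the equality ${\rm Gen}_{G}(T)=D_{\theta}$ by proving the two inclusions separately. The forward inclusion is formal from the relative torsion properties of $D_{\theta}$, while the reverse one is exactly where the hypothesis about $G$-exact sequences through left $D_{\theta}$-approximations will be used.

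For ${\rm Gen}_{G}(T)\subseteq D_{\theta}$, I would start with any $M\in{\rm Gen}_{G}(T)$ and choose, by definition, a $G$-exact sequence $T_{0}\lxr M\lxr 0$ with $T_{0}\in{\rm Add}T$. Partial silting axiom (Gs2) puts $T\in D_{\theta}$, and since $D_{\theta}$ is a relative torsion class (hence closed under coproducts and summands), $T_{0}\in D_{\theta}$; closure under $G$-epimorphic images then gives $M\in D_{\theta}$.

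For the reverse inclusion $D_{\theta}\subseteq{\rm Gen}_{G}(T)$, I would fix $M\in D_{\theta}$ and exploit the standing assumption that $A\mbox{-}{\rm GP}$ is contravariantly finite in $A\mbox{-}{\rm Mod}$: this furnishes a proper Gorenstein-projective presentation of $M$, and in particular a $G$-epimorphism $\alpha\colon P\lxr M$ with $P\in A\mbox{-}{\rm GP}$. Applying the hypothesis to this $P$ yields a $G$-exact sequence $P\stackrel{\phi}{\lxr} T_{0}\lxr T_{-1}\lxr 0$ with $T_{0},T_{-1}\in{\rm Add}T$ and $\phi$ a left $D_{\theta}$-approximation. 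Since $M\in D_{\theta}$, the approximation property of $\phi$ produces some $\beta\colon T_{0}\lxr M$ with $\beta\phi=\alpha$; because $\alpha$ is surjective, so is $\beta$. To show $T_{0}\lxr M\lxr 0$ is $G$-exact, I would take an arbitrary Gorenstein-projective $G'$ and any $g\colon G'\lxr M$, use that $\alpha$ is $G$-epic to lift $g$ to $\widetilde{g}\colon G'\lxr P$ with $\alpha\widetilde{g}=g$, and then observe $\beta(\phi\widetilde{g})=\alpha\widetilde{g}=g$, so ${\rm Hom}_{A}(G',T_{0})\lxr{\rm Hom}_{A}(G',M)$ is surjective. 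Therefore $M\in{\rm Gen}_{G}(T)$.

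The step I expect to be the main obstacle is precisely the promotion of the ordinary epimorphism $\beta$ to a $G$-epimorphism; it is the only place where all three ingredients are used simultaneously, namely the $G$-epic lifting along $\alpha$ (guaranteed by the contravariant finiteness of $A\mbox{-}{\rm GP}$), the factorisation coming from $\phi$ being a left $D_{\theta}$-approximation, and the membership $M\in D_{\theta}$. Everything else in the proof is bookkeeping with the closure properties of the relative torsion class $D_{\theta}$ and with the definition of ${\rm Add}T$.
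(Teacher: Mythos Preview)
Your argument is correct, but note that the paper does not actually supply a proof of this lemma: it is quoted verbatim as \cite[Proposition~2.3]{GMZ} and used as a black box in the proof of Theorem~\ref{Gsilting}. So there is no ``paper's own proof'' to compare against here.

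That said, your two-inclusion strategy is exactly the expected one and matches the shape of the classical (non-Gorenstein) argument for silting modules. The forward inclusion ${\rm Gen}_{G}(T)\subseteq D_{\theta}$ is indeed immediate from (Gs1)--(Gs2), and for the reverse inclusion your use of a right $A\mbox{-}{\rm GP}$-approximation $\alpha\colon P\to M$ together with the left $D_{\theta}$-approximation $\phi$ is the standard trick; the verification that the resulting $\beta\colon T_{0}\to M$ is not merely surjective but $G$-epic, via lifting along $\alpha$ and then pushing through $\phi$, is clean and complete. Nothing is missing.
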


\vskip 10pt

\begin{thm}\
\label{Gsilting}
Let $X\in A\mbox{-}{\rm Mod}$ and $Y\in B\mbox{-}{\rm Mod}$. Then the following are equivalent:
\vskip 5pt
\begin{enumerate}
\item {\rm (a)}\ $(X, 0)\oplus (N\otimes_{B}Y, Y)$ is a Gorenstein silting $\Gamma\mbox{-}$module;

\vskip 5pt

\noindent{\rm (b)}\ there exists a $G\mbox{-}$exact sequence $(P, 0)\oplus (N\otimes_{B}E, E)\stackrel{\lambda}{\lxr} (X_{0}, 0)\oplus (N\otimes_{B}Y_{0}, Y_{0})\lxr (X_{-1}, 0)\oplus (N\otimes_{B}Y_{-1}, Y_{-1})\lxr 0$ with $\lambda=\begin{pmatrix}\begin{smallmatrix}
(\phi, 0) & 0 \\
0 & ({\rm Id}_{N}\otimes \psi, \psi) \\
\end{smallmatrix}\end{pmatrix}$, $X_{i}\in {\rm Add}X$ and $Y_{i}\in {\rm Add}Y$ such that $\lambda$ is the left $D_{\theta}\mbox{-}$approximation, $i=-1,0$, for the Gorenstein-projective $\Gamma\mbox{-}$module $(P, 0)\oplus (N\otimes_{B}E, E)$.

\vskip 5pt

\item {\rm (c)}\ $X$ is a Gorenstein silting $A\mbox{-}$module and $N\otimes_{B}Y\in {\rm Gen}_{G}X$;

\vskip 5pt

\noindent{\rm (d)}\ $Y$ is a Gorenstein silting $B\mbox{-}$module;

\vskip 5pt

\noindent{\rm (e)}\ there exists an exact sequence $P\stackrel{\phi}{\lxr} X_{0}\lxr X_{-1}\lxr 0$ with $X_{0}$ and $X_{-1}$ in ${\rm Add}X$ such that $\phi$ is the left $D_{\theta_{X}}\mbox{-}$approximation, for each $P\in A\mbox{-}{\rm P}$;

\vskip 5pt

\noindent{\rm (f)}\ there exists a $G\mbox{-}$exact sequence $E\stackrel{\psi}{\lxr} Y_{0}\lxr Y_{-1}\lxr 0$ with $Y_{0}$ and $Y_{-1}$ in ${\rm Add}Y$ such that $\psi$ is the left $D_{\theta_{Y}}\mbox{-}$approximation, for each $E\in B\mbox{-}{\rm GP}$.
\end{enumerate}
\end{thm}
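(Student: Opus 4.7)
The plan is to establish the equivalence by separating the partial Gorenstein silting content from the ``left approximation'' content and applying Corollary~\ref{partial} together with Lemma~\ref{partialtosilting} on each side. The structural fact that indecomposable Gorenstein-projective $\Gamma$-modules are precisely $(P,0)$ with $P$ projective over $A$ or $(N\otimes_{B}E,E)$ with $E$ Gorenstein-projective over $B$ (this is Lemma~\ref{GP} applied to the recollement in question, using $\mathrm{gl.dim}\,A<\infty$) will be used repeatedly to decompose $\Gamma$-level data block-diagonally.

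For the direction $[(a)\wedge(b)]\Rightarrow[(c),(d),(e),(f)]$: First, applying Corollary~\ref{partial} to (a) immediately yields that $X$ is a partial Gorenstein silting $A$-module, $Y$ is a partial Gorenstein silting $B$-module, and $N\otimes_{B}Y\in\mathrm{Gen}_{G}X$. Next, the block-diagonal form of $\lambda$ in (b) means the sequence splits into two $G$-exact sequences, namely $P\xrightarrow{\phi}X_{0}\to X_{-1}\to 0$ in $A$-$\mathrm{Mod}$ and $E\xrightarrow{\psi}Y_{0}\to Y_{-1}\to 0$ in $B$-$\mathrm{Mod}$; $G$-exactness of these is preserved by the exact functors $U_{A}$ and $U_{B}$. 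To transfer the approximation property, for any $M\in D_{\theta_{X}}$ Lemma~\ref{torsion class}(ii) gives $(M,0)\in D_{\theta}$, and the Hom-identity $\mathrm{Hom}_{\Gamma}((N\otimes_{B}E,E),(M,0))=0$ reduces the factorization of a $\Gamma$-map $(P,0)\oplus(N\otimes_{B}E,E)\to(M,0)$ through $\lambda$ to the factorization of an $A$-map $P\to M$ through $\phi$; this yields (e). Parallel use of targets $(N\otimes_{B}L,L)$ with $L\in D_{\theta_{Y}}$ (invoking Lemma~\ref{torsion class}(iii) and the identification $\mathrm{Hom}_{\Gamma}((N\otimes_{B}E,E),(N\otimes_{B}L,L))\cong\mathrm{Hom}_{B}(E,L)$) yields (f). Finally, the partial Gorenstein silting statements together with (e) and (f) feed into Lemma~\ref{partialtosilting} to upgrade $X$ and $Y$ to full Gorenstein silting modules, giving (c) and (d).

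For the reverse direction: From (c) and (d) one has that $X$ and $Y$ are in particular partial Gorenstein silting, and (c) supplies $N\otimes_{B}Y\in\mathrm{Gen}_{G}X$, so Corollary~\ref{partial} produces the partial Gorenstein silting structure of $(X,0)\oplus(N\otimes_{B}Y,Y)$ with respect to $\theta$. I then assemble $\lambda$ by taking the direct sum of $Z_{A}$ applied to the sequence in (e) and $T_{B}$ applied to the sequence in (f); since $Z_{A}$ preserves exactness and $T_{B}$ preserves $G$-exactness (as $N_{B}$ is projective, $N\otimes_{B}-$ is exact and preserves Gorenstein-projectives), the resulting sequence is $G$-exact in $\Gamma$-$\mathrm{Mod}$. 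To check that $\lambda$ is a left $D_{\theta}$-approximation of the general Gorenstein-projective $\Gamma$-module $(P,0)\oplus(N\otimes_{B}E,E)$, use Lemma~\ref{torsion class}(i) to see that any $D\in D_{\theta}$ has components $U_{A}(D)\in D_{\theta_{X}}$ and $U_{B}(D)\in D_{\theta_{Y}}$; any $\Gamma$-map into $D$ then unwinds componentwise to an $A$-map out of $P$ and a $B$-map out of $E$, each of which factors through $\phi$ and $\psi$ respectively by (e) and (f), and these factorizations assemble into a $\Gamma$-factorization through $\lambda$ by compatibility with the structural map of $D$. A final application of Lemma~\ref{partialtosilting} then promotes the partial Gorenstein silting module $(X,0)\oplus(N\otimes_{B}Y,Y)$ to a Gorenstein silting module, giving (a); the constructed sequence is exactly (b).

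The main obstacle is the Hom-bookkeeping in the approximation translation: one must carefully track how a $\Gamma$-map whose source and target are built out of two ``legs'' of the triangular matrix structure assembles from the $A$-leg and $B$-leg data, ensuring that the compatibility condition $a\circ f=f'\circ(\mathrm{Id}_{N}\otimes b)$ in the definition of $\Gamma$-maps does not interfere with block-diagonal factorization. The key observations making this manageable are that maps from $(N\otimes_{B}E,E)$ into a module of the form $(M,0)$ vanish, so no off-diagonal coupling is forced; and that the natural identifications of Hom-spaces together with Lemma~\ref{torsion class} make $D_{\theta}$-membership a componentwise condition, so the universal property of $\lambda$ becomes a conjunction of the universal properties of $\phi$ and $\psi$.
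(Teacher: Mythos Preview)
Your overall strategy matches the paper's: reduce to Corollary~\ref{partial} for the partial Gorenstein silting content, transfer the approximation data componentwise through the block-diagonal form of $\lambda$, and close with Lemma~\ref{partialtosilting}. The reverse direction $(\mathrm{ii})\Rightarrow(\mathrm{i})$ is handled correctly and is essentially the paper's argument.

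There is, however, a genuine gap in your forward direction when you deduce (f). You test the approximation property of $\lambda$ against targets $T_{B}(L)=(N\otimes_{B}L,L)$ for $L\in D_{\theta_{Y}}$, invoking Lemma~\ref{torsion class}(iii). But that lemma carries the extra hypothesis $U_{A}T_{B}(L)=N\otimes_{B}L\in D_{\theta_{X}}$, which you have not verified; at this stage of your argument $X$ is only known to be \emph{partial} Gorenstein silting, so you cannot yet identify $D_{\theta_{X}}$ with $\mathrm{Gen}_{G}X$ and leverage $N\otimes_{B}Y\in\mathrm{Gen}_{G}X$ (and even then the passage from $Y$ to an arbitrary $L\in D_{\theta_{Y}}$ would be circular, since it presupposes (d)). The fix is simple and is what the paper does: test against $(0,L)$ instead. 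By Lemma~\ref{torsion class}(i) one has $(0,L)\in D_{\theta}$ automatically, since $U_{A}(0,L)=0\in D_{\theta_{X}}$ and $U_{B}(0,L)=L\in D_{\theta_{Y}}$; the required identification $\mathrm{Hom}_{\Gamma}((N\otimes_{B}E,E),(0,L))\cong\mathrm{Hom}_{B}(E,L)$ still holds, while $\mathrm{Hom}_{\Gamma}((X_{0},0),(0,L))=0$ kills the other leg, so factoring $(0,(0,b))$ through $\lambda$ yields the desired factorization of $b\colon E\to L$ through $\psi$.

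A secondary difference worth noting: the paper obtains (c) and (d) directly from (a) alone, by showing $D_{\theta_{X}}=\mathrm{Gen}_{G}X$ and $D_{\theta_{Y}}=\mathrm{Gen}_{G}Y$ through an explicit analysis of $G$-epimorphisms from copies of $(X,0)\oplus(N\otimes_{B}Y,Y)$ onto $(K,0)$ and $(0,L)$. Your route---partial Gorenstein silting from (a), then (e) and (f) from (b), then Lemma~\ref{partialtosilting}---also reaches (c) and (d), but uses (b) where the paper does not. Both routes prove the theorem as stated; the paper's is slightly sharper in that it isolates (a)$\Rightarrow$(c),(d) as a self-contained implication.
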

\begin{proof}\ (i)$\Longrightarrow$(ii)\ Let $(X, 0)\oplus (N\otimes_{B}Y, Y)$ be a Gorenstein silting $\Gamma\mbox{-}$module. Then
$$D_{\theta}={\rm Gen}_{G}((X, 0)\oplus (N\otimes_{B}Y, Y)).$$
Moreover, by Corollary~\ref{partial}, ${\rm Gen}_{G}X\subseteq D_{\theta_{X}}$, ${\rm Gen}_{G}Y\subseteq D_{\theta_{Y}}$, $N\otimes_{B}Y\in D_{\theta_{X}}$. And $(K, 0)\in D_{\theta}$ if $K\in D_{\theta_{X}}$ by Lemma~\ref{torsion class}. Now let $K\in D_{\theta_{X}}$. Then there is  a $G\mbox{-}$epimorphism
$$((X, 0)\oplus (N\otimes_{B}Y, Y))^{I}\lxr (K, 0)$$
for an index set $I$. If $(K, 0)$ has a direct summand $(K', 0)$ such that there is a $G\mbox{-}$epimorphism $(N\otimes_{B}Y, Y)^{J}\lxr (K', 0)$ for some index set $J$. Then there is the following commutative diagram with $G\mbox{-}$epic columns
\[\xymatrix@C=50pt{
(N\otimes_{B}Y)^{I} \ar[d]_{0} \ar[r]^{{\rm Id}} &   (N\otimes_{B}Y)^{I} \ar[d]^{\kappa}     \\
0  \ar[r]^{} & K',  }
\]
and so $K'=0$. This implies that $(K, 0)\in {\rm Gen}_{G}(X, 0)$ and  $D_{\theta_{X}}={\rm Gen}_{G}X$. Thus $X$ is a Gorenstein silting $A\mbox{-}$module and $N\otimes_{B}Y\in D_{\theta_{X}}={\rm Gen}_{G}X$. On the other hand, let $L\in D_{\theta_{Y}}$, then $(0, L)\in D_{\theta}$. Hence there is a $G\mbox{-}$epimorphism
$$((X, 0)\oplus (N\otimes_{B}Y, Y))^{I}\lxr (0, L)$$
for an index set $I$. Then we can get that there is a $G\mbox{-}$epimorphism $(N\otimes_{B}Y, Y)^{I}\lxr (0, L)$, and so we have a $G\mbox{-}$epimorphism $Y^{I}\lxr  L$. This implies that $D_{\theta_{Y}}\subseteq {\rm Gen}_{G}Y$. Thus $Y$ is a Gorenstein silting $B\mbox{-}$module.

\vskip 5pt

From (b), we can get the exact sequence $P\stackrel{\phi}{\lxr} X_{0}\lxr X_{-1}\lxr 0$ with $X_{i}\in {\rm Add}X$ and the $G\mbox{-}$exact sequence $E\stackrel{\psi}{\lxr} Y_{0}\lxr Y_{-1}\lxr 0$ with $Y_{i}\in {\rm Add}Y$ for $i=-1,0$, where $P\in A\mbox{-}{\rm P}$ and $E\in B\mbox{-}{\rm GP}$. We claim that $\phi$ is the left $D_{\theta_{X}}\mbox{-}$approximation and $\psi$ is the left $D_{\theta_{Y}}\mbox{-}$approximation. In fact, let $U\in D_{\theta_{X}}$ and $a_{1}\in {\rm Hom}_{A}(P, U)$. Then $(U, 0)\in D_{\theta}$ and $((a_{1}, 0),\ 0)\in {\rm Hom}_{\Gamma}((P, 0)\oplus (N\otimes_{B}E, E),\ (U, 0))$. Thus there exists $((a'_{1}, 0),\ v)\in {\rm Hom}_{\Gamma}((X_{0}, 0)\oplus (N\otimes_{B}Y_{0}, Y_{0}),\ (U, 0))$ such that the following diagram commutes:
\[
\xymatrix@C=80pt{
(P, 0)\oplus (N\otimes_{B}E, E) \ar[d]_{((a_{1}, 0),\ 0)} \ar[r]^{\lambda} &   (X_{0}, 0)\oplus (N\otimes_{B}Y_{0}, Y_{0})\ar@{-->}[ld]^{((a'_{1}, 0),\ v)}   \\
(U, 0).   &      }
\]
Then we get from $((a'_{1}, 0),\ v)\circ \lambda=((a_{1}, 0),\ 0)$ that $a'_{1}\circ \phi=a_{1}$. This implies that $\phi$ is a left $D_{\theta_{X}}\mbox{-}$approximation. Similarly, we can prove that $\psi$ is the left $D_{\theta_{Y}}\mbox{-}$approximation.

\vskip 5pt

(ii)$\Longrightarrow$(i)\ We know from Corollary~\ref{partial} that $(X, 0)\oplus (N\otimes_{B}Y, Y)$ is a partial Gorenstein silting module. Since there are the $G\mbox{-}$exact sequences
$$G^{\bullet}: P\stackrel{\phi}{\lxr} X_{0}\lxr X_{-1}\lxr 0$$
and
$$E^{\bullet}: E\stackrel{\psi}{\lxr} Y_{0}\lxr Y_{-1}\lxr 0,$$
we get an exact sequence
$$S^{\bullet}: (P, 0)\oplus (N\otimes_{B}E, E)\stackrel{\lambda}{\rightarrow} (X_{0}, 0)\oplus (N\otimes_{B}Y_{0}, Y_{0})\rightarrow (X_{-1}, 0)\oplus (N\otimes_{B}Y_{-1}, Y_{-1})\rightarrow 0$$
with $\lambda=\begin{pmatrix}\begin{smallmatrix}
(\phi, 0) & 0 \\
0 & ({\rm Id}_{N}\otimes \psi, \psi) \\
\end{smallmatrix}\end{pmatrix}$. Moreover, since each Gorenstein-projective $\Gamma\mbox{-}$module is of the form $(P, 0)\oplus (N\otimes_{B}E, E)$, we get that ${\rm Hom}_{\Gamma}((P, 0)\oplus (N\otimes_{B}E, E),\ S^{\bullet})$ is exact. Therefore, $S^{\bullet}$ is $G\mbox{-}$exact.

\vskip 5pt

We next prove that $\lambda$ is the left $D_{\theta}\mbox{-}$approximation. Let $(U, V, h)\in D_{\theta}$ and $((a_{1}, 0),\ (a_{2}, b_{2}))\in {\rm Hom}_{\Gamma}((P, 0)\oplus (N\otimes_{B}Q, Q),\ (U, V, h))$. Then there exist the following commutative diagrams below:
\[
\xymatrix@C=80pt{
(P, 0)\oplus (N\otimes_{B}Q, Q) \ar[d]_{((a_{1}, 0),\ (a_{2}, b_{2}))} \ar[r]^{\lambda} &   (X_{0}, 0)\oplus (N\otimes_{B}Y_{0}, Y_{0})\ar@{-->}[ld]^{}   \\
(U, V, h).   &      }
\]
and
\[
\xymatrix@C=60pt{
N\otimes_{B}Q \ar[d]_{{\rm Id}_{N}\otimes b_{2}} \ar[r]^{{\rm Id}_{N\otimes Q}} &   N\otimes_{B}Q \ar[d]^{a_{2}}     \\
N\otimes_{B}V  \ar[r]^{h} & U.   }
\]
Moreover, $U\in D_{\theta_{X}}$ and $V\in D_{\theta_{Y}}$ by Lemma~\ref{torsion class}. Since $\phi$ is a left $D_{\theta_{X}}\mbox{-}$approximation and $\psi$ is a left $D_{\theta_{Y}}\mbox{-}$approximation, there exist $a'_{1}\in {\rm Hom}_{A}(X_{0}, U)$ and $b'_{2}\in {\rm Hom}_{B}(Y_{0}, V)$ respectively such that $a_{1}= a'_{1}\phi$ and $b_{2}=b'_{2}\psi$, as shown in the diagram below:
\[
\xymatrix@C=50pt{
P \ar[d]_{a_{1}} \ar[r]^{\phi} &   X_{0} \ar@{-->}[ld]^{a'_{1}}   \\
U  &      }\ \ \ \ \ \ \ \ \ \ \ \ \ \ \ \ \ \
\xymatrix@C=50pt{
Q \ar[d]_{b_{2}} \ar[r]^{\psi} &   Y_{0} \ar@{-->}[ld]^{b'_{2}}   \\
V  &      }
\]
Then we can get the following equalities:
$$\begin{aligned}
((a'_{1}, 0),\ (h({\rm Id}_{N}\otimes b_{2}), b'_{2}))\circ \lambda &= ((a'_{1}, 0),\ (h({\rm Id}_{N}\otimes b_{2}), b'_{2}))\circ \begin{pmatrix}\begin{smallmatrix}
(\phi, 0) & 0 \\
0 & ({\rm Id}_{N}\otimes \psi, \psi) \\
\end{smallmatrix}\end{pmatrix}\\
&= ((a'_{1}\phi, 0),\ (h({\rm Id}_{N}\otimes b_{2})({\rm Id}_{N}\otimes \psi), b'_{2}\psi))\\
&= ((a_{1}, 0),\ (a_{2}, b_{2})).
\end{aligned}$$
This implies that $\lambda$ is a left $D_{\theta}\mbox{-}$approximation. Therefore we can get from Lemma~\ref{partialtosilting} that $(X, 0)\oplus (N\otimes_{B}Y, Y)$ is a Gorenstein silting $\Gamma\mbox{-}$module.
\end{proof}

\vskip 20pt

\end{document}